\newtheorem{theorem}{Theorem}[section]
\newtheorem{lemma}[theorem]{Lemma}
\newtheorem*{remark}{Remark}
\newtheorem*{definition}{Definition}
\begin{document}

\title{Sums of two squares in short intervals}
\author{James Maynard}
\email{james.alexander.maynard@gmail.com}
%
%

\begin{abstract}We show that there are short intervals $[x,x+y]$ containing $\gg y^{1/10}$ numbers expressible as the sum of two squares, which is many more than the average when $y=o( (\log{x})^{5/9})$. We obtain similar results for sums of two squares in short arithmetic progressions.
\end{abstract}
\maketitle
\section{Introduction}
Let $X$ be large and $y\in (0,X]$. The prime number theorem shows that \textit{on average} for $x\in[X,2X]$ we have
\begin{equation}
\pi(x+y)-\pi(x)\approx \frac{y}{\log{x}}.\label{eq:ShortInterval}
\end{equation}
The approximation \eqref{eq:ShortInterval} is known (\cite{HeathBrownIntervals}, improving on \cite{Huxley}) to hold in the sense of asymptotic equivalence for \textit{all} $x\in[X,2X]$ provided that $X^{7/12}\le y\le X$, whilst the celebrated result of Maier \cite{Maier} shows that \eqref{eq:ShortInterval} does \textit{not} hold in the sense of asymptotic equivalence for all $x\in[X,2X]$ when $y$ is of size $(\log{X})^A$, for any fixed $A>0$. He showed that for $y=(\log{X})^A$, there exists $x\in[X,2X]$ such that
\begin{equation}
\pi(x+y)-\pi(x)>(1+\delta_A)\frac{y}{\log{x}}
\end{equation}
for a positive constant $\delta_A$ depending only on $A$ (and he also obtained similar results for intervals containing fewer than the average number of primes).

It is trivial that in the range $y\ll \log{X}$, \eqref{eq:ShortInterval} cannot hold in the sense of asymptotic equivalence, and the extent to which this approximation fails is closely related to the study of gaps between primes. Maier and Stewart \cite{MaierStewart} combined the Erd\H os-Rankin construction for large gaps between primes with the work of Maier to get show the existence of intervals containing significantly fewer primes than the average. It follows from the recent advances in the study of small gaps between primes (\cite{Zhang}, \cite{Maynard}, \cite{MaynardII} and unpublished work of Tao) that there are also short intervals containing significantly more primes than the average.
%

The situation is similar for the set $\mathcal{S}$ of integers representable by the sum of two squares. Hooley \cite{HooleyIV} has shown that for almost all $x\in[X,2X]$ one has
\begin{equation}
\frac{\mathfrak{S}y}{\sqrt{\log{X}}}\ll \sum_{n\in\mathcal{S}\cap[x,x+y]}1\ll \frac{\mathfrak{S}y}{\sqrt{\log{X}}},
\end{equation}
provided $y/\sqrt{\log{X}}\rightarrow\infty$ (here $\mathfrak{S}>0$ is an absolute constant so that the above sum is $(1+o(1))\mathfrak{S}y/\sqrt{\log{X}}$ on average over $x\in[X,2X]$).

By adapting Maier's method, Balog and Wooley \cite{BalogWooley} showed that for any fixed $A>0$ and  $y= (\log{X})^A$, there exists $\delta_A'>0$ and an $x\in[X,2X]$ such that
\begin{equation}
\sum_{n\in\mathcal{S}\cap[x,x+y]}1\ge (1+\delta_A')\frac{\mathfrak{S}y}{\sqrt{\log{X}}},
\end{equation}
and so Hooley's result cannot be strengthened to an asymptotic which holds for all $x\in[X,2X]$ when $y$ is of size $(\log{x})^A$. 

As a result of the `GPY sieve', Graham Goldston Pintz and Y\i ld\i r\i m \cite{GGPY} have shown that for any fixed $m$ there exists $x\in[X,2X]$ such that $[x,x+O_m(1)]$ contains $m$ numbers which are the product of two primes $\equiv 1\pmod{4}$, and so $m$ elements of $\mathcal{S}$. By modifying the GPY sieve to find short intervals containing many elements of $\mathcal{S}$, we show the existence of intervals $[x,x+y]$ containing a higher order of magnitude than the average number when $y$ is of size $o((\log{X})^{5/9})$. Thus in this range we see that the upper bound in Hooley's result cannot hold for all $x\in[X,2X]$. (It is a classical result that the lower bound cannot hold for all $x$ in such a range;  the strongest such result is due Richards \cite{Richards} who has shown there exists $x\in[X,2X]$ such that there are no integers $n\in\mathcal{S}$ in an interval $[x,x+O(\log{x})]$.)
\section{Intervals with many primes or sums of two squares}
Let the function $g:(0,\infty)\rightarrow\mathbb{R}$ be defined by
\begin{equation}
g(t)=\sup_{u\ge t}e^{\gamma}\omega (u),
\end{equation}
where $\gamma$ is the Euler constant and $\omega (u)$ is the Buchstab function
\footnote{The Buchstab function $\omega (u)$ is defined by the delay-differential equation \[\omega (u)=u^{-1}\quad (0<u\le 2),\qquad\qquad \frac{\partial}{\partial u}(u\omega(u))=\omega(u-1)\quad (u\ge 2).\]}
. Clearly $g$ is decreasing, and it is known that $g(t)>1$ for all $t>0$.

We first recall the current state of knowledge regarding short intervals and short arithmetic progressions containing unusually many primes
\begin{theorem}\label{theorem:Prime}
Fix $\epsilon>0$. There is a constant $c_0(\epsilon)>0$ such that we have the following:
\begin{enumerate}
\item Let $X,y$ satisfy $c_0(\epsilon)\le y\le X$. Then there exists at least $X^{1-1/\log\log{X}}$ values $x\in[X,2X]$ such that
\[\pi(x+y)-\pi(x)\ge\Bigl(g\Bigl(\frac{\log{y}}{\log\log{x}}\Bigr)-\epsilon+c\frac{(\log{x})(\log{y})}{y}\Bigr)\frac{y }{\log{x}}.\]
\item Fix $a\in\mathbb{N}$, and let $Q,x$ satisfy $c_0(\epsilon)\le Q\le x/2$. Then there exists at least $Q^{1-1/\log\log{Q}}$ values $q\in [Q,2Q]$ such that
\[\pi(x;q,a)\ge \Bigl(g\Bigl(\frac{\log{x/q}}{\log\log{x}}\Bigr)-\epsilon+c\frac{(\log{x})(\log{x/q})}{x/q}\Bigr)\frac{x}{\phi(q)\log{x}}.\]
\item Let $q,x$ satisfy $c_0(\epsilon)\le q\le x/2$, and let $q$ have no prime factors less than $\log{x}/\log\log{x}$. Then there exists at least $q^{1-1/\log\log{q}}$ integers $a\in[1,q]$ such that
\[\pi(x;q,a)\ge \Bigl(g\Bigl(\frac{\log{x/q}}{\log\log{x}}\Bigr)-\epsilon+c\frac{(\log{x})(\log{x/q})}{x/q}\Bigr)\frac{x}{\phi(q)\log{x}}.\]
\end{enumerate}
Here $c>0$ is an absolute constant.
\end{theorem}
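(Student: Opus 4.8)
The plan is to combine the GPY/Maynard sieve machinery for small gaps between primes with Maier's matrix method, essentially following the strategy that must already be in the literature for the Maier-type results but making the constant explicit in terms of the Buchstab function. The starting point is the observation that the function $g(t)=\sup_{u\ge t}e^\gamma\omega(u)$ arises from Maier's theorem: there are moduli $P=\prod_{p\le z}p$ and residues for which the density of integers coprime to $P$ in an interval of length $\approx z^u$ exceeds the average by a factor $\to e^\gamma\omega(u)$ (this is the content of the sieve weights in Maier's matrix method, with $\log z\approx \log\log x$ giving $y\approx z^u$, i.e. $u\approx \log y/\log\log x$). So first I would set up a suitable admissible configuration: fix a parameter $z$ with $\log z$ a small constant times $\log\log x$, let $P$ be the product of primes up to $z$, and restrict attention to progressions $n\equiv b\pmod P$ with $(b+h_i,P)=1$ for a carefully chosen admissible tuple $\{h_1,\dots,h_k\}\subseteq[0,y]$; Maier's combinatorial lemma guarantees a choice of $b$ and a sub-interval on which the relevant Buchstab-type count is larger than average by a factor approaching $g$ of the relevant argument.

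Next I would run the multidimensional GPY/Maynard sieve on the remaining variable. On the arithmetic progression $n\equiv b\pmod P$, the primes $n+h_i$ behave like primes in a set of density inflated by the Maier factor, and the Selberg-type sieve weights $\bigl(\sum_{d_i\mid n+h_i} \lambda_{d_1,\dots,d_k}\bigr)^2$ detect, on average over $n$ in a dyadic block, roughly $k\rho$ of the shifts $n+h_i$ being prime, where $\rho$ is the usual $\log$-of-the-optimal-quadratic-form ratio from \cite{Maynard}. Choosing $k$ large (depending on $\epsilon$) makes $k\rho$ as large as we like, and tracking the Maier density boost through the main term of the sieve computation produces the claimed lower bound $\pi(x+y)-\pi(x)\ge (g(\log y/\log\log x)-\epsilon + \text{correction})\,y/\log x$ for a positive proportion — in fact $\gg X^{1-1/\log\log X}$ many — of shifts $x$. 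The secondary term $c(\log x)(\log y)/y$ comes from the fact that when $y$ is only a little larger than $\log x$, one can additionally gain from the GPY bounded-gaps phenomenon (a genuine surplus of $\gg 1$ extra primes beyond the Maier-inflated expectation), which contributes an extra additive $\gg (\log x)/y \cdot (\log y)$ after normalizing; the $\log y$ factor reflects that the admissible tuple of a given size fits in an interval of length $\asymp k\log k \asymp (\log y)\cdot(\text{something})$, so smaller $y$ permits relatively fewer points and the bounded-gap surplus is proportionally larger.

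For parts (ii) and (iii), the argument is formally identical after the standard dictionary between short intervals and arithmetic progressions: in (ii) one fixes the residue $a$ and the modulus plays the role of the "interval", using the GPY-type results valid for primes in progressions to modulus up to $x^{1/2-\delta}$ (Bombieri–Vinogradov suffices since we only need the count on average over $q\in[Q,2Q]$), while the Maier matrix is run over the smooth part of $q$; in (iii) one instead fixes $q$ (with the stated condition that $q$ has no small prime factors, which is exactly what is needed so that the Maier construction on the residue $a$ is unobstructed — the small primes dividing $q$ would otherwise interfere with the sieve) and varies $a$. In all three cases the counting of how many $x$ (resp.\ $q$, resp.\ $a$) work is done by a simple averaging/pigeonhole argument over the Maier matrix rows, which loses only a factor $X^{1/\log\log X}$ (from the number of admissible residues $b$ modulo $P$ and the length of the block), hence the bound $X^{1-1/\log\log X}$ on the number of good $x$.

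The main obstacle I anticipate is making the interaction between the two mechanisms clean: the Maier matrix method wants to fix congruences modulo a smooth modulus $P$ and gain a density factor, while the Maynard sieve wants a lot of independent room and level of distribution, and one has to check that restricting to $n\equiv b\pmod P$ does not destroy the level of distribution needed for the sieve's error terms (it does not, because $P$ is a fixed modulus of size $x^{o(1)}$, so Bombieri–Vinogradov-type input survives, but this must be verified carefully), and that the optimization of the sieve weights can be carried out \emph{uniformly} in the Maier density parameter so that the final constant is exactly $g(\cdot)-\epsilon$ rather than something weaker. A secondary technical point is making the error term $c(\log x)(\log y)/y$ genuinely positive and correctly sized when $y$ is close to the constant $c_0(\epsilon)$; here one should think of the GPY contribution as a clean additive bounded-gaps bonus and separate it cleanly from the Maier multiplicative gain, rather than trying to optimize both simultaneously.
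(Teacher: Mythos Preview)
The paper does not actually prove this theorem: immediately after the statement it says ``all the statements of Theorem~\ref{theorem:Prime} follow immediately from the work \cite{Maier}, \cite{FriedlanderGranvilleIII} and \cite{MaynardII}'', and then explains that the two terms in parentheses dominate in complementary ranges. When $y$ (resp.\ $x/q$) is large compared with $\log x\cdot\log\log x$, the $g(\cdot)-\epsilon$ term dominates and the second term is $o(1)$, so this is just Maier's theorem (and its Friedlander--Granville analogue for progressions). When $y$ is small compared with $\log x\cdot\log\log x$, the term $c(\log x)(\log y)/y$ dominates, and this is the output of the dense-clusters result \cite{MaynardII}: an admissible $k$-tuple in $[0,y]$ with $k\asymp y/\log y$ produces $\gg\log k\asymp\log y$ primes for many $n$. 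The theorem as stated is simply the sum of two separate known lower bounds, each valid throughout but sharp in only one regime.

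Your proposal is more ambitious and tries to run the Maier matrix and the Maynard sieve \emph{simultaneously}, restricting to $n\equiv b\pmod{P}$ and then applying the multidimensional sieve on top. This is unnecessary, and the description of the interaction is not quite right. The assertion that the sieve on $n\equiv b\pmod{P}$ detects ``roughly $k\rho$ of the shifts being prime'' with the density inflated by the Maier factor conflates two different gains: Maynard's sieve produces $\asymp\log k$ primes in a $k$-tuple as an \emph{absolute} count, not a multiplicative boost to an expected density, so it does not compose with the Maier factor the way you suggest. Conversely, in the Maier regime $y=(\log x)^A$ the bounded-gaps contribution is only $O(\log\log x)$ primes, negligible next to the $\gg y/\log x$ that Maier already gives. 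You do arrive at the correct conclusion in your last sentence --- ``separate it cleanly from the Maier multiplicative gain, rather than trying to optimize both simultaneously'' --- and that is precisely what the paper does, by citation.
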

 We do not claim Theorem \ref{theorem:Prime} is new; all the statements of Theorem \ref{theorem:Prime} follow immediately from the work \cite{Maier}, \cite{FriedlanderGranvilleIII} and  \cite{MaynardII}.

When the number of integers in the short interval or arithmetic progression is large compared with $\log{x}\log\log{x}$, the first terms in parentheses dominate the right hand sides, and we obtain the results of Maier \cite{Maier} and Friedlander and Granville \cite{FriedlanderGranvilleIII} that there are many intervals  and arithmetic progressions which contain more than the average number of primes by a constant factor. When the number of integers is small compared with $\log{x}\log\log{x}$, the final terms dominate, and we see that there are many intervals and arithmetic progressions which contain a number of primes which is a higher order of magnitude than the average number.

Part 3 of Theorem \ref{theorem:Prime} is one of the key ingredients in recent work of the author on large gaps between primes \cite{MaynardLargeGaps}. The previous best lower bound for the largest gaps between consecutive primes bounded by $x$ was due to Pintz \cite{Pintz} who explicitly indicated that one obstruction to improving his work was the inability of previous techniques to show the existence of many primes in such a short arithmetic progression with prime modulus (this issue was also clear in the key work of Maier and Pomerance \cite{MaierPomerance} on this problem). The independent improvement on this bound by Ford, Green, Konyagin and Tao \cite{FGKT} used the Green-Tao technology to show there are many primes $q$ for which there were many primes in an arithmetic progression modulo $q$, although it appears this approach does not give improvements to Theorem \ref{theorem:Prime}.

The aim of this paper is to establish an analogous statement to Theorem \ref{theorem:Prime} for the set $\mathcal{S}$ of numbers representable as the sum of two squares. We first require some notation. We define the counting functions $S(x)$ and $S(x;q,a)$, the constant $\mathfrak{S}$ and the multiplicative function $\phi_{\mathcal{S}}$ by
\begin{align}
S(x)&=\#\{n\le x:n\in\mathcal{S}\},\\
S(x;q,a)&=\#\{n\le x:n\in\mathcal{S},n\equiv a\pmod{q}\},\\
\mathfrak{S}&=\frac{1}{\sqrt{2}}\prod_{p\equiv 3\pmod{4}}\Bigl(1-\frac{1}{p^2}\Bigr)^{-1/2},\\
\phi_\mathcal{S}(p^e)&=
\begin{cases}
p^e,\quad &p\equiv 1\pmod{4},\\
p^{e+1}/(p+1),& p\equiv 3\pmod{4},\\
2^{e-1},&p=2\text{ and }e\ge 2,\\
2, &p^e=2.
\end{cases}
\end{align} 
With this notation, the average number $S(x+y)-S(x)$ of elements of $\mathcal{S}$ in a short interval $[x,x+y]$ is $\sim\mathfrak{S}y/\sqrt{\log{x}}$ and the average number $S(x;q,a)$ of elements of $\mathcal{S}$ in the arithmetic progression $a\pmod{q}$ for $(a,q)=1$ and $a\equiv1\pmod{(4,q)}$ which are less than $x$ is $\sim\mathfrak{S}x/(\phi_{\mathcal{S}}(q)\sqrt{\log{x}})$.

Finally, we let $F$ be the sieve function occurring in the upper bound half dimensional sieve, that is the function defined by the delay-differential equations
\begin{equation}
\begin{aligned}
F(s)&=\frac{2e^{\gamma/2}}{\pi^{1/2}s^{1/2}}\text{ for }0\le s\le 2,\qquad &(s^{1/2}F(s))'&=\frac{1}{2}s^{-1/2}f(s-1)\text{ for }s>2,\\
f(s)&=0\text{ for }0\le s\le 1,\qquad &(s^{1/2}f(s))'&=\frac{1}{2}s^{-1/2}F(s-1)\text{ for }s>1.
\end{aligned}
\end{equation}
It is known that $1<F(s)<1+O(e^{-s})$ for all $s>0$. 
\begin{theorem}\label{theorem:Squares}
Fix $\epsilon>0$, and let $\mathcal{S}$ denote the set of integers representable as the sum of two squares. There is a constant $c_0(\epsilon)>0$ such that we have the following:
\begin{enumerate}
\item Let $X,y$ satisfy $c_0(\epsilon)\le y\le X$. Then there exists at least $X^{1-1/\log\log{X}}$ values $x\in[X,2X]$ such that
%
\[S(x+y)-S(x)
\ge\Bigl(F\Bigl(\frac{\log{y}}{\log\log{x}}\Bigr)-\epsilon+c\frac{(\log{x})^{1/2}}{y^{9/10}}\Bigr)\frac{\mathfrak{S} y }{(\log{x})^{1/2}}.\]
\item Fix $a\in\mathcal{S}$, and let $Q,x$ satisfy $c_0(\epsilon)\le Q\le x/2$. Then there exists at least $Q^{1-1/\log\log{Q}}$ values $q\in [Q,2Q]$ such that
%
\[S(x;q,a)
\ge \Bigl(F\Bigl(\frac{\log{x/q}}{\log\log{x}}\Bigr)-\epsilon+c\frac{(\log{x})^{1/2}}{(x/q)^{9/10}}\Bigr)\frac{\mathfrak{S} x}{\phi_\mathcal{S}(q)(\log{x})^{1/2}}.\]
\item Let $q,x$ satisfy $c_0(\epsilon)\le q\le x/2$, and let $q$ have no prime factors less than $\log{x}/\log\log{x}$. Then there exists at least $q^{1-1/\log\log{q}}$ integers $a\in[1,q]$ such that
%
\[S(x;q,a)
\ge \Bigl(F\Bigl(\frac{\log{x/q}}{\log\log{x}}\Bigr)-\epsilon+c\frac{(\log{x})^{1/2}}{(x/q)^{9/10}}\Bigr)\frac{\mathfrak{S} x}{\phi_{\mathcal{S}}(q)(\log{x})^{1/2}}.\]
\end{enumerate}
Here $c>0$ is an absolute constant.
\end{theorem}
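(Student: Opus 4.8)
The plan is to adapt the GPY/Maynard sieve machinery (as used for Theorem~\ref{theorem:Prime}) to the set $\mathcal{S}$ of sums of two squares, replacing the von Mangoldt weight by the indicator (or a suitable harmonic-sum proxy) of $\mathcal{S}$. The backbone is a Maier-type matrix argument: one constructs a carefully chosen modulus $P$ (a product of primes in a dyadic or multi-scale range), and shows that as $n$ runs over an interval, the translates $n+1,\dots,n+y$ reduced mod $P$ equidistribute, so that some shift exhibits an abnormally large count of elements of $\mathcal{S}$. The constant $F(s)$ appearing in the main term is exactly the upper-bound half-dimensional sieve function, which enters because $\mathcal{S}$ has sieve dimension $1/2$: sieving out $n\equiv 0\pmod p$ for $p\equiv 3\pmod 4$, $p\le z$, leaves a density governed by $\prod_{p\equiv 3(4),\,p\le z}(1-1/p)\sim c/\sqrt{\log z}$, and the fundamental lemma of the half-dimensional sieve produces $F$ in the resulting asymptotic, analogously to how $e^\gamma\omega$ arises from the linear sieve in Maier's theorem.

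The key steps, in order, are: (i) Set up the combinatorial identity expressing $\sum_{x<n\le x+y}\mathbf{1}_{\mathcal{S}}(n)$ in terms of a sieve weight supported on integers free of prime factors $p\equiv 3\pmod 4$ below some threshold $z$, times a short ``GPY-type'' polynomial weight that concentrates mass on nearly-prime values; this is where the $\log y/\log\log x$ argument of $F$ comes from (the level-of-distribution parameter is of size $y$, the sifting parameter of size $\log\log x$). (ii) Prove the requisite mean-value/equidistribution estimate for $\mathcal{S}$ in arithmetic progressions to moduli up to a small power of $x$ (for part~1) or for $\mathcal{S}$ restricted to progressions $a\pmod q$ (for parts~2 and~3), using the classical fact that $\mathbf{1}_{\mathcal{S}}$ is a multiplicative-convolution object ($\mathbf{1}_{\mathcal{S}}=\mathbf{1}*\chi_{-4}$-type structure, giving $S(x)\sim\mathfrak{S}x/\sqrt{\log x}$ and good error terms via contour integration or Landau's method). (iii) Run the Maier matrix: average the sieve main term over the rows, extract the row achieving at least the average, and verify that the main term is $(F(\log y/\log\log x)-\epsilon/2)\,\mathfrak{S}y/\sqrt{\log x}$. (iv) For the small-$y$ regime, insert the GPY/Maynard multidimensional sieve weights to gain the extra additive term $c(\log x)^{1/2}/y^{9/10}\cdot\mathfrak{S}y/\sqrt{\log x}$, i.e. $c(\log x)^{1/2}y^{1/10}$ actual extra elements — here the exponent $1/10$ (versus $1/9$ or better) reflects the quality of the known equidistribution/positivity input for the multidimensional sieve applied to $\mathcal{S}$. (v) Count the number of successful $x$ (resp.\ $q$, resp.\ $a$): since the matrix has many rows and a positive proportion succeed, one gets the claimed $X^{1-1/\log\log X}$ (resp.\ $Q^{1-1/\log\log Q}$, $q^{1-1/\log\log q}$) count, exactly as in the prime case.

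**The main obstacle** I expect is step~(iv) combined with the prime-modulus restriction in step~(ii) for part~3: obtaining a genuinely super-average ($y^{1/10}$-type) lower bound requires a GPY/Maynard-style sieve for $\mathcal{S}$ in which the ``success'' criterion is a positivity statement $\sum \rho(n)(\mathbf{1}_{\mathcal{S}}\text{-count} - \theta) > 0$ for admissible weights $\rho$, and this demands equidistribution of $\mathcal{S}$ in progressions with a power-of-$x$ level of distribution \emph{and}, for part~3, uniformity over progressions to a single prime-power-free modulus $q$ with no small prime factors. Establishing the latter is precisely the analogue of the difficulty Pintz flagged in the prime setting; here one must exploit that $\mathbf{1}_{\mathcal{S}}$ is a nonnegative multiplicative-type weight to run a switching/Selberg-sieve argument that tolerates the modulus constraint. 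A secondary technical point is proving that the half-dimensional sieve asymptotic is uniform enough in the sifting parameter $s = \log y/\log\log x$ as $s\to\infty$, so that the bound $F(s) > 1$ (and $F(s) = 1 + O(e^{-s})$) can be cleanly combined with the GPY term — but this should follow from the known quantitative theory of the function $F$. The remaining steps are, as in Theorem~\ref{theorem:Prime}, adaptations of now-standard arguments from \cite{Maier}, \cite{FriedlanderGranvilleIII}, \cite{BalogWooley} and \cite{MaynardII}.
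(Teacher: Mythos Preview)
Your overall architecture is correct: the two summands in the lower bound come from separate arguments (the Maier matrix of Balog--Wooley for the $F$-term, a GPY-type sieve for the $y^{-9/10}$ term), and the counting of successful $x$, $q$, $a$ is routine once these are in place. But your step~(iv) and your identified ``main obstacle'' both miss the paper's central technical idea, and one of your proposed tools is explicitly singled out in the paper as unhelpful.

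For the GPY step the paper does \emph{not} prove equidistribution of $\mathcal{S}$ in arithmetic progressions. Instead it passes to the subset
\[
\mathcal{S}'=\{mp:\ m\in\langle P_1\rangle,\ m<X^{1/3},\ p\text{ prime},\ p\equiv 1\pmod 4\}\subset\mathcal{S}
\]
(Lemma~\ref{lemma:Squares}). Detecting $L_j(n)\in\mathcal{S}'$ reduces, after summing over $m$, to counting primes $p\equiv 1\pmod 4$ in progressions to modulus $4a_jW[d,e]$, for which only ordinary Bombieri--Vinogradov (with the usual exceptional-modulus caveat, handled by Lemma~\ref{lemma:BV} via the prime $p_0$) is needed. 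Your worry about equidistributing $\mathcal{S}$ to a single modulus $q$ with no small prime factors therefore evaporates: once the admissible-set result Theorem~\ref{theorem:Admiss} is established, parts~1--3 follow uniformly by choosing $L_i(n)=n+h_i$, $L_i(n)=a+h_in$, $L_i(n)=n+h_iq$ with $k=y^{1/5}$, $(x/Q)^{1/5}$, $(x/q)^{1/10}$ respectively. No switching argument or special treatment of part~3 is required.

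Second, the paper uses the \emph{classical} one-variable GPY sieve, not the multidimensional Maynard weights you propose; it remarks explicitly that the multidimensional refinement gains only $y^{o(1)}$ here. The modification that matters is restricting the sieve to divisors $d\in\langle P_3\rangle$, so the two quadratic forms have sieve dimension $k/2$ and $(k-1)/2$ rather than $k$ and $k+1$. The ratio computation (Lemma~\ref{lemma:Ratio}) then shows the weighted count of $\{i:L_i(n)\in\mathcal{S}\}$ exceeds $\gg k^{1/2}$ times the weight sum. The exponent $9/10$ thus arises as $1-\tfrac{1}{2}\cdot\tfrac{1}{5}$ from $k=y^{1/5}$ and the $k^{1/2}$ gain, not from any level-of-distribution constraint on $\mathcal{S}$.
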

Similarly to Theorem \ref{theorem:Prime}, when the number of integers in the short interval or arithmetic progression is large compared with $(\log{x})^{5/9}$, we find that there more than the average number of integers representable as the sum of two square, which is a result of Balog and Wooley \cite{BalogWooley}, based on Maier's method. When the number of integers is small compared with $(\log{x})^{5/9}$, then the number of integers representable as the sum of two squares is of a higher order of magnitude than the average. We note that unlike Theorem \ref{theorem:Prime}, Theorem \ref{theorem:Squares} improves on the Maier matrix bounds in a range beyond the trivial region by a full positive power of $\log{x}$.

The constant $9/10$ appearing in the exponent in the denominators of the final terms of Theorem \ref{theorem:Squares} is certainly not optimal; with slightly more effort one could certainly improve this. We satisfy ourselves with a weaker bound for simplicity here, the key point of interest being one can obtain a constant less than 1, and so there are intervals of length considerably larger than the average gap between elements of $\mathcal{S}$ which contain a higher order of magnitude than the average number of elements.

Analogously to the work of Granville and Soundararajan \cite{GranvilleSound}, we believe the phenomenon of significant failures of equidistribution in short intervals should hold in rather more general `arithmetic sequences' which are strongly equidistributed in arithmetic progressions (in the sense of a weak Bombieri-Vinogradov type theorem). We hope to return to this in a future paper.
\section{Overview}
We give a rough overview of the methods, emphasizing the similarities between Maier's matrix method and the GPY method. For simplicity we concentrate on the case when we are looking for primes in short intervals; the arguments for arithmetic progressions are similar, and when looking for sums of two squares one wishes to `sieve out' only primes congruent to $3\pmod{4}$.

We estimate a weighted average of the number of primes in a short interval, and wish to show that this weighted average is larger than what one would obtain if the primes were evenly distributed. More specifically, we consider the ratio
\begin{equation}
\Bigl(\sum_{X<n<2X}\#\Bigl\{p\text{ prime}\in[n,n+h]\Bigr\}w_n\Bigr)\Bigg /\Bigl(\sum_{X<n<2X}\frac{h}{\log{X}}w_n\Bigr),\label{eq:Ratio}
\end{equation}
where $w_n$ are some non-negative weights chosen to be large when the interval $[n,n+h]$ contains many numbers with no small prime factors, and small otherwise.

If the ratio \eqref{eq:Ratio} is greater than some constant $c_0$, then it is clear that there must be at least one $n\in[X,2X]$ such that $[n,n+h]$ contains at least $c_0h/\log{x}$ primes (i.e. $c_0$ times the average number). Thus we aim to choose $w_n$ to make this ratio as large as possible.

In the range $h=o(\log{X})$, we choose the weights $w_n$ to be concentrated on integers $n$ such that the interval $[n,n+h]$ \textit{only}\footnote{We ignore the elements of $[n,n+h]$ which are a multiple of some prime $p\le h$, which is unavoidable.} contains numbers with no small prime factors
\footnote{Here a `small prime' refers to one bounded by roughly $\exp(h^{-1+o(1)}\log{x})$.}
. To achieve this we choose $w_n$ to mimic weights of Selberg's $\Lambda^2$-sieve
\begin{equation}
w_n\approx\Bigl(\sum_{\substack{d_1,\dots,d_h\\ d_i|n+i}}\lambda_{d_1,\dots,d_h}\Bigr)^2,\label{eq:SelbergWeights}
\end{equation}
where $\lambda_{d_1,\dots,d_h}$ are real numbers chosen later to optimize the final result.

In the range $h\approx (\log{x})^\lambda$ for some constant $\lambda>1$ it is unavoidable that a large number of elements of $[n,n+h]$ will have small prime factors, and so we require a slightly different choice of the weights $w_n$. Instead we observe that if $n$ is a multiple of primes less than $z=(\log{x})^{\lambda'}$ (where $\lambda'<1$), then the probability that a random element of $[n,n+h]$ is coprime to all these primes fluctuates depending on the size of $\lambda/\lambda'$, and can be more than the average for general $n$. More specifically, we take 
\begin{equation}
w_n\approx
\begin{cases}
1,\qquad &n\equiv 0\pmod{\prod_{p<z}p},\\
0,&\text{otherwise.}
\end{cases}\label{eq:MaierWeights}
\end{equation}
The number of elements of $[n,n+h]$ which have no prime factors less than $z$ is given by
\begin{equation}
\#\{j\in[1,h]:(j,p)=1\forall p< z\}\sim e^\gamma \omega\Bigl(\frac{\log{h}}{\log{z}}\Bigr)h\prod_{p<z}\frac{p-1}{p}\label{eq:RoughCount}
\end{equation}
for $z\rightarrow \infty$ and $z<h^{1-\epsilon}$ (here $\omega$ is the Buchstab function). Thus, given $h$ we can choose $z$ so that the number of elements coprime to small primes is larger than the expected number in a random interval by a factor $e^\gamma \omega(\log{h}/\log{z})>1$.

Given this choice of weights, to estimate the numerator in \eqref{eq:Ratio} we rewrite the count of the primes as a sum of the indicator function of the primes, and swap the order of summation. This gives
\begin{equation}
\sum_{j=1}^h\sum_{X<n\le 2X}\mathbf{1}_{\mathbb{P}}(n+j)w_n.
\end{equation}
(Here, and throughout this paper, $\mathbf{1}_{\mathcal{A}}$ will denote the indicator function of a set $\mathcal{A}$, in this case the set $\mathbb{P}$ of primes.) Our choice of $w_n$ means that we can estimate this expression using knowledge of the distribution of primes in arithmetic progressions\footnote{One needs to be slightly careful about the possible effect of Siegel zeros here, but this is a minor technical issue.}. The inner sum vanishes or has a simple asymptotic expression depending on the arithmetic structure of $j$. In the case of the Maier weights, this leads us to \eqref{eq:RoughCount}, which gives our asymptotic for the numerator, and hence the ratio \eqref{eq:Ratio}. In the range $h=o(\log{X})$, we obtain a quadratic form in the coefficients $\lambda_{d_1,\dots,d_h}$ for the numerator, and similarly a quadratic form for the denominator. We then make a choice of these $\lambda_{d_1,\dots,d_h}$ which (approximately) maximizes the ratio of these quadratic forms.

\begin{remark}
One can heuristically investigate Selberg-sieve weights \eqref{eq:SelbergWeights} for larger $h$, and (assuming various error terms are negligble) conclude that an approximately optimal choice of weights $\lambda_{d_1,\dots,d_k}$ should be given by $\lambda_{d_1,\dots,d_k}\approx \mu(d_1\dots d_k)$ if $\prod_{i=1}^k d_i$ has all of its prime factors less than a small multiple of $\log{x}$ (and $0$ otherwise). This corresponds precisely to Maier's choice of weights \eqref{eq:MaierWeights}.
\end{remark}
\section{Admissible sets of linear functions}
The bounds involving the sieve function $F$ in Theorem \ref{theorem:Squares} follow from the argument of Balog and Wooley \cite{BalogWooley}, and by making minor adaptions analogous to the work of Friedlander-Granville \cite{FriedlanderGranvilleIII}. We sketch such arguments in Section \ref{section:Irregularities}.

 Therefore the main task of this paper is to establish the bounds coming from the final terms in parentheses of Theorem \ref{theorem:Squares}. We will do this by an adaptation of the GPY sieve method. Indeed, we will actually prove a rather stronger result, given by Theorem \ref{theorem:Admiss} below, which is roughly analogous to the main theorem of \cite{MaynardII}.

To ease notation, we let $\langle P_1\rangle$ denote the set of integers composed only of primes congruent to $1\pmod{4}$. Similarly let $\langle P_3\rangle$ those composed of primes congruent to $3\pmod{4}$.
\begin{definition}[$\langle P_3\rangle$ - Admissibility]
We say a set $\mathcal{L}=\{L_1,\dots,L_k\}$ of distinct linear functions $L_i(n)=a_i n+b_i$ with integer coefficients is \emph{$\langle P_3\rangle$-admissible} if for every prime $p\equiv 3\pmod{4}$ there is an integer $n_p$ such that $(\prod_{i=1}^k L_i(n_p),p)=1$, and if $a_i,b_i>0$ for all $i$.
\end{definition}
\begin{theorem}\label{theorem:Admiss}
Let $x$ be sufficiently large and $k\le (\log{x})^{1/5}$. Then there is a prime $p_0\gg \log\log{x}$ such that the following holds.

Let $\mathcal{L}=\{L_1,\dots,L_k\}$ be a $\langle P_3\rangle$-admissible set of linear functions such that the coefficients of $L_i(n)=a_in+b_i$ satisfy $0<a_i\le (\log{x})^{1/3}$, $(2p_0,a_i)=1$,  and $0<b_i<x$. There exists an absolute constant $C>0$ such that
\[\#\{n\in[x,2x]:\text{ at least $k^{1/2}/C$ of $L_1(n),\dots,L_k(n)$ are in $\mathcal{S}$}\} \ge x\exp(-(\log{x})^{1/2}).\]
\end{theorem}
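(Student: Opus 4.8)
The plan is to adapt the multidimensional GPY/Maynard sieve of \cite{MaynardII} to detect elements of $\mathcal{S}$ among the values $L_1(n),\dots,L_k(n)$, where the role played by primes in the original setup is played by integers in $\langle P_1\rangle$ (products of primes $\equiv 1\pmod 4$), which are guaranteed to lie in $\mathcal{S}$. Concretely, I would fix a sieve level $R=x^{\theta}$ for a small fixed $\theta>0$, a small auxiliary parameter, and consider the weighted sum
\[
S:=\sum_{n\in[x,2x]}\Bigl(\sum_{i=1}^k \mathbf{1}_{\mathcal{S}}(L_i(n))-\tfrac{k^{1/2}}{C}\Bigr)w_n,\qquad
w_n=\Bigl(\sum_{\substack{d_i\mid L_i(n)\,\forall i\\ d_i\in\langle P_3\rangle,\ \prod d_i<R}}\lambda_{d_1,\dots,d_k}\Bigr)^2,
\]
where the $\lambda_{d_1,\dots,d_k}$ are real numbers supported on squarefree tuples built from primes $\equiv 3\pmod 4$ and given (after a smooth change of variables) in terms of a fixed smooth cutoff function $F\colon[0,1]^k\to\mathbb{R}$, exactly as in \cite{MaynardII}. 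If I can show $S>0$, then there is some $n\in[x,2x]$ with at least $k^{1/2}/C$ of the $L_i(n)$ in $\mathcal{S}$; the quantitative lower bound $x\exp(-(\log x)^{1/2})$ on the number of such $n$ will follow by the standard trick of also bounding $\sum_n w_n$ from above by $x\exp(O((\log x)^{1/2}/\log\log x))$ or so (a crude bound suffices) and noting that $w_n\ge 0$, so the count of "good" $n$ is at least $S/\max_n w_n$; here one only needs $\max_n w_n\le x^{o(1)}$, which holds since $R=x^\theta$ forces at most $x^{o(1)}$ tuples and each $\lambda$ is polynomially bounded.

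The computation splits into three pieces. First, the "denominator" sum $\sum_n w_n$: expanding the square and swapping summation, the inner sum over $n$ in a fixed residue class modulo $\mathrm{lcm}$ of the $d_i,e_i$ has main term proportional to $x$ times a multiplicative density; the half-dimensional nature comes in because only primes $\equiv 3\pmod 4$ are being sieved, each contributing a local factor $\asymp 1/2$ on average, so the diagonalization produces an integral of $F$ against a measure of "dimension $1/2$" rather than dimension $1$ — this is where the sieve function $F$ of the half-dimensional sieve enters. Second, each term $\sum_n \mathbf{1}_{\mathcal{S}}(L_i(n))w_n$: I would replace $\mathbf{1}_{\mathcal{S}}$ by a lower-bound sieve for $\langle P_1\rangle$-integers, i.e. detect $L_i(n)$ with no prime factor $\equiv 3\pmod 4$ below $x^{\eta}$ via a Buchstab/Rosser–Iwaniec lower-bound half-dimensional sieve, using the fact that such numbers (up to the usual $\langle P_1\rangle$-vs-$\mathcal{S}$ discrepancy, which is negligible) lie in $\mathcal{S}$; the arithmetic input is the distribution of $\langle P_1\rangle$-integers in arithmetic progressions to moduli up to $x^{\theta+\eta}$, which follows from the analogous statement for primes $\equiv 1 \pmod 4$ via Bombieri–Vinogradov (being slightly careful about a possible exceptional character to a modulus near $\log\log x$, which is exactly why the exceptional prime $p_0\gg\log\log x$ appears — one excludes that modulus from the $a_i$). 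This piece yields a main term proportional to $x$ times an integral of the "reduced" function $\int_0^1 F(\dots,t_i,\dots)\,d\mu(t_i)$ against the one-dimensional marginal, up to the constant $F$-value from the half-dimensional sieve.

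Comparing, $S/(\sum_n w_n) \sim \mathcal{I}_2^{-1}\bigl(\kappa\sum_{i=1}^k \mathcal{J}_i - k^{1/2}/C\bigr)$ for explicit quadratic functionals $\mathcal{I}_2,\mathcal{J}_i$ of $F$ and an absolute constant $\kappa>0$ coming from the half-dimensional local densities, and the task is to choose $F$ making this positive, i.e. to make $\sum_i \mathcal{J}_i/\mathcal{I}_2 \gg k^{1/2}$. This is precisely the optimization carried out in \cite{MaynardII}, where a symmetric choice of $F$ of the form $\prod(1-\text{small})$ or a piecewise construction gives $\sum_i\mathcal{J}_i/\mathcal{I}_2\gg \log k$ in the prime case; I expect the half-dimensional weighting to cost at most a constant factor, so one still gets $\gg \log k \gg k^{1/2}$ once... no — one needs $\gg k^{1/2}$, which is weaker than $\log k$ for large $k$, so any of the standard constructions that beat $\log k$ certainly suffice, and in fact one could even use a cruder, more robust choice of $F$. \textbf{The main obstacle} I anticipate is not the sieve optimization but controlling the error terms uniformly in $k$ up to $k\le(\log x)^{1/5}$ and in the linear coefficients $a_i\le(\log x)^{1/3}$, $b_i<x$: the number of error terms in the expansion grows like (number of tuples)$^2 \le R^{2}x^{o(1)}$, each of size controlled on average by Bombieri–Vinogradov, and one must check that the accumulated loss is $\le x\exp(-(\log x)^{1/2})$, i.e. that $\theta$ can be taken fixed while the $k$-dependence stays under control — this forces the restriction $k\le(\log x)^{1/5}$ and requires the careful bookkeeping of the "crude" variant of the method, together with handling the $\langle P_1\rangle$-to-$\mathcal{S}$ passage and the exceptional-modulus issue cleanly.
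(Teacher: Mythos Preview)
There is a genuine gap in your optimization step. You write that the Maynard construction gives $\sum_i \mathcal{J}_i/\mathcal{I}_2 \gg \log k$, and then assert that ``$\gg k^{1/2}$ \dots\ is weaker than $\log k$ for large $k$, so any of the standard constructions that beat $\log k$ certainly suffice''. This inequality is backwards: for large $k$ one has $k^{1/2} \gg \log k$, so the requirement $\gg k^{1/2}$ is \emph{strictly stronger} than $\gg \log k$, and the multidimensional optimization giving $\log k$ does not by itself deliver $k^{1/2}$. The whole point of the theorem is that one can produce $\gg k^{1/2}$ elements of $\mathcal{S}$, which is a much larger count than the $\log k$ one gets for primes; you have not identified the mechanism that produces this larger count.

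The paper's argument is in fact simpler than yours and makes this mechanism explicit. It uses the \emph{original} one-variable GPY sieve (a single divisor $d\mid \prod_i L_i(n)$, $d\in\langle P_3\rangle$), not the multidimensional weights --- the author remarks explicitly that the multidimensional refinement only gains a factor $k^{o(1)}$ here. The gain of $k^{1/2}$ comes from the \emph{half-dimensional} nature of the problem: since only primes $\equiv 3\pmod 4$ are sieved, the denominator quadratic form is a sieve of dimension $k/2$ and the numerator of dimension $(k-1)/2$, and the resulting ratio involves $\Gamma(1+k/2)/\Gamma((k+3)/2)\asymp k^{-1/2}$; combined with the sum over $k$ forms and with the density $\asymp(\log x)^{-1/2}$ of $\langle P_1\rangle$-integers (detected via $L_i(n)=mp$ with $m\in\langle P_1\rangle$, $m<X^{1/3}$, $p\equiv 1\pmod 4$ prime, rather than via a lower-bound sieve), this yields exactly $k\cdot k^{-1/2}=k^{1/2}$. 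You treat the half-dimensionality as ``costing at most a constant factor'', but it is precisely the source of the extra power of $k$. Finally, the count of good $n$ is obtained not via $S/\max_n w_n$ (your bound $\max_n w_n\le x^{o(1)}$ is not justified and is delicate given $k\le(\log x)^{1/5}$) but by Cauchy--Schwarz against $\sum_n w_n^2$, which one bounds directly by $X(\log X)^{O(k^2)}$.
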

We remark that the conditions $k\le (\log{X})^{1/5}$, $2\nmid a_i$, $a_i<(\log{X})^{1/3}$ and $b_i\le x$ are considerably weaker than what our method requires, but simplify some of the later arguments slightly and are sufficient for our application.

In the region where $y$, $x/Q$ and $x/q$ are small compared with $(\log{x})^{5/9}$, the bounds in Theorem \ref{theorem:Squares} follow easily from Theorem \ref{theorem:Admiss}. For sums of squares in short intervals, we consider $y\le (\log{x})^{5/9+\epsilon}$. We take $k=y^{1/5}$, and let $\mathcal{L}=\{L_1,\dots,L_k\}$ be the $\langle P_3\rangle$-admissible set of linear functions $L_i(n)=n+h_i$, where $h_1,\dots,h_k$ are the first $k$ positive integers in $\langle P_1\rangle$ which are coprime with $p_0$. (This is $\langle P_3\rangle$-admissible, since $L_i(0)$ is coprime to all primes $p\equiv 3\pmod{4}$ for all $i$). We note that $h_k\ll k(\log{k})^{1/2}\le y$. It then follows immediately from Theorem \ref{theorem:Admiss} that there are $\gg X\exp(-(\log{X})^{1/2})$ values of $n\in[X,2X]$ such that $\gg k^{1/2}=y^{1/10}$ of the $L_i(n)$ are in $\mathcal{S}$, and hence $\gg X^{1-1/\log\log{X}}$ values of $x\in[X,2X]$ such that $[x,x+y]$ contains at least $y^{1/10}$ elements of $\mathcal{S}$.

Parts $2$ and $3$ of Theorem \ref{theorem:Squares} follow analogously, defining $k=(x/Q)^{1/5}$ and $L_i(n)=a+h_i n$ when $Q>x(\log{x})^{-5/9-\epsilon}$ for part $2$, and defining $k=(x/q)^{1/10}$ and $L_i(n)=n+h_i q$ when $q>x(\log{x})^{-5/9-\epsilon}$ for part $3$.
We note that it follows from Theorem \ref{theorem:Squares} that actually there are many intervals $[x,x+y]$ containing $\gg y^{1/2}(\log{y})^{-1/4}$ elements of $\mathcal{S}$ (rather than $\gg y^{1/10}$), for the smaller range $1\le y\le (\log{x})^{1/5}$.
\section{Irregularities from the Maier matrix method}\label{section:Irregularities}
As previously mentioned, the bounds in Theorem \ref{theorem:Squares} when $y$, $x/Q$ or $x/q$ large compared with $(\log{x})^{5/9}$ follow from minor adaptions of the argument of Balog and Wooley \cite{BalogWooley}. In particular, the relevant bound for part $2$ of Theorem \ref{theorem:Squares} follows from the proof of \cite[Theorem 1]{BalogWooley}. We give  brief outline of the argument for parts $2$ and $3$, leaving the details to the interested reader.

We first consider part $2$ of Theorem \ref{theorem:Squares}. We assume that $(\log{x})^{5/9-\epsilon}\le x/Q\le (\log{X})^{O(1)}$ since otherwise either the result follows from Theorem \ref{theorem:Admiss}, or the result is trivial. For simplicity we shall assume that $a$ is odd; the case for even $a$ is entirely analogous. We let $P=\prod_{p\equiv3\pmod{4},p\le z}p^{\alpha_p}$, where $\alpha_p$ is the least odd integer such that $p^{\alpha_p}\ge a(4x/Q+1)$ and $z=\log{x}/(\log\log{x})^2$. We note that $P=Q^{o(1/\log\log{Q})}$. Finally, we let $\mathcal{Q}=\{q\in[(1-\delta)Q+4a,Q+4a]:q\equiv 4a\pmod{4P}\}$ for some small fixed constant $\delta=\delta(\epsilon)>0$. We have
\begin{align}
\sum_{q\in \mathcal{Q}}
S(x;q,a)&
\ge \sum_{r\le x/Q-1}\,\sum_{q\in\mathcal{Q}}\mathbf{1}_{\mathcal{S}}(a+r q)\nonumber\\
&= \sum_{r\le x/Q-1}\,\sum_{m\in[(1-\delta)Q/4P,Q/4P]}\mathbf{1}_{\mathcal{S}}(a(4r+1)+4mrP).
\end{align}
The inner sum is counting elements of $\mathcal{S}$ in an arithmetic progression to modulus $4 r P$. We note that by construction of $P$ we must have $(P,a)=e^2$ for some $e\in\langle P_3 \rangle$ since $a\in\mathcal{S}$, and similarly if the inner sum is non-empty we must have $(P,a(4r+1))=e^2 d^2$ since $a(4r+1)+4mrP\in \mathcal{S}$. Moreover $P/(e^2d^2)$ and $P$ are composed of the same prime factors, since each prime occurs in $P$ with odd multiplicity. Thus, if $r$ is such that $(P,4r+1)=d^2$, the inner sum is
\begin{equation}
(1+o(1))\frac{\mathfrak{S}\delta r Q/(e^2d^2)}{\phi_\mathcal{S}(4rP/(d^2e^2))\sqrt{\log{rQ/d^2e^2}}}\sim \frac{\mathfrak{S}\delta Q}{2\phi_{\mathcal{S}}(P)\sqrt{\log{x}}}.
\end{equation}
Therefore, letting $u d^2=4r+1$, we obtain
\begin{equation}
\sum_{q\in \mathcal{Q}}
S(x;q,a)
\ge \frac{(1+o(1))\mathfrak{S}\delta Q}{2\phi_{\mathcal{S}}(P)\sqrt{\log{x}}}\sum_{d^2|P}\,\sum_{\substack{u<(4x/Q+1)/d^2\\ u\equiv 1\pmod{4}\\ (u,P)=1}}1.
\end{equation}
This double sum is exactly the sum $\mathcal{R}^+$ which is estimated in \cite[proof of Lemma 4.3]{BalogWooley}. In particular, they show that
\begin{equation}
\sum_{d^2|P}\,\sum_{\substack{u<(4x/Q+1)/d^2\\ u\equiv 1\pmod{4}\\ (u,P)=1}}1\sim \frac{\phi_\mathcal{S}(P)}{P}F\Bigl(\frac{\log{x/Q}}{\log{z}}\Bigr).\label{eq:R+Bound}
\end{equation}
Recalling that $z=\log{x}/(\log\log{x})^2$, one arrives at (for $\delta$ sufficiently small)
\begin{equation}
\sum_{q\in \mathcal{Q}}\Biggl(
S(x;q,a)-\Bigl(F\Bigl(\frac{\log{x/q}}{\log\log{x}}\Bigr)-\epsilon\Bigr)\frac{\mathfrak{S}x}{\phi_\mathcal{S}(q)\sqrt{\log{x}}}\Biggr)\gg \frac{x}{P\sqrt{\log{x}}}.
\end{equation}
This implies the relevant bound in part $2$ of Theorem \ref{theorem:Squares}.

We now consider part $3$ of Theorem \ref{theorem:Squares}. We let $\tilde{P}=\prod_{p\equiv 3\pmod{4},p\le z}p^{\beta_p}$ where $\beta_p$ is the least odd integer such that $p^{\beta_p}\ge 4x/q$, and let $\mathcal{A}=\{a\in[(1-\delta)q,q]:a\equiv a_0\pmod{P}\}$. We see that
\begin{align}
\sum_{a\in \mathcal{A}}
S(x;q,a)
&\ge \sum_{r\le (x-q)/q}\,\sum_{a\in\mathcal{A}}\mathbf{1}_{\mathcal{S}}(a+r q)\nonumber\\
&= \sum_{r\le x/q-1}\,\sum_{m\in[((1-\delta)q-a_0)/P,(q-a_0)/P]}\mathbf{1}_{\mathcal{S}}(a_0+rq+mP).
\end{align}
We see the inner sum counts elements of $\mathcal{S}$ in an arithmetic progression modulo $P$. We choose $a_0\equiv q/4\pmod{P}$, so that $(a_0+rq,P)=(4r+1,P)$ since, by assumption, $q$ has no prime factors in common with $P$. Thus $(4r+1,P)=d^2$ for some $d\in\langle P_3\rangle$ by the same argument as above, and we obtain
\begin{align}
\sum_{a\in \mathcal{A}}
S(x;q,a)
&\ge \frac{(1+o(1))\mathfrak{S}\delta q}{\phi_{\mathcal{S}}(P)\sqrt{\log{x}}}\sum_{d^2|P}\,\sum_{\substack{u<(4x/q-3)/d^2 \\ u\equiv 1\pmod{4}\\ (u,P)=1}}1.
\end{align}
Again, using \eqref{eq:R+Bound}, we obtain
\begin{equation}
\sum_{a\in \mathcal{A}}\Biggl(
S(x;q,a)
-\Bigl(F\Bigl(\frac{\log{x/q}}{\log\log{x}}\Bigr)-\epsilon\Bigr)\frac{\mathfrak{S}x}{\phi_\mathcal{S}(q)\sqrt{\log{x}}}\Biggr)\gg \frac{x}{P\sqrt{\log{x}}},
\end{equation}
which implies the relevant bound for part $3$.
\section{Setup for Theorem \ref{theorem:Admiss}}
The improvements of the GPY sieve in the author's work \cite{Maynard,MaynardII} are not significant for the application of finding sums of two squares in short intervals (the improvement would be $y^{o(1)}$ in the final term for part $1$ of Theorem \ref{theorem:Squares}, for example), and so we will use a uniform version of the simplest GPY sieve.

In order to get a result that applies in the larger range, however, it is necessary to modify the sieve to the application; a uniform version of the argument in \cite{GGPY} (i.e. counting numbers with two prime factors both congruent to $1\pmod{4}$) would only give non-trivial results in the region $y\ll (\log{x})^{1/2}\log\log{x}$, for example.

We follow a similar argument to \cite{GGPY} (and attempt to keep the notation similar), but make modifications to allow for uniformity (similar to those in \cite{GPYII} and \cite{MaynardII}, although it is simpler in this context) and to specialize so as to remove only primes congruent to $3\pmod{4}$ (so we will apply sieves of `dimension' $k/2$ and $(k+1)/2$ instead of dimension $k$ and $k+1$).

In order to state our setup, we require the following Lemma.
\begin{lemma}\label{lemma:BV}
Let $x>10$ and $\epsilon>0$. There exists a prime $p_0\in[(\log\log{x})/2,x]$ such that
\[\sum_{\substack{q<x^{1/2-\epsilon}\\ (q,p_0)=1}}\,\sup_{\substack{(a,q)=1\\ x'\le x}}\Bigl|\pi(x';q,a)-\frac{\pi(x')}{\phi(q)}\Bigr|\ll_\epsilon x\exp(-c_0\sqrt{\log{x}}),\]
for some absolute constant $c_0>0$.
\end{lemma}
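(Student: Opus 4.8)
The plan is to derive Lemma~\ref{lemma:BV} from the Bombieri--Vinogradov theorem together with a standard argument for removing the potential exceptional (Siegel) zero. First I would recall that the Bombieri--Vinogradov theorem gives, for any fixed $A>0$,
\[
\sum_{q<x^{1/2}(\log x)^{-B}}\,\sup_{\substack{(a,q)=1\\x'\le x}}\Bigl|\pi(x';q,a)-\frac{\pi(x')}{\phi(q)}\Bigr|\ll_A \frac{x}{(\log x)^A},
\]
where $B=B(A)$. The difficulty is that we want the much stronger saving $\exp(-c_0\sqrt{\log x})$ rather than an arbitrary power of $\log x$, and we want to extend the range of $q$ up to $x^{1/2-\epsilon}$. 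The point is that such a strong saving \emph{is} available in the range $q<x^{1/2-\epsilon}$ if one is permitted to delete the multiples of a single bad modulus; this is exactly the kind of statement that underlies the Siegel--Walfisz theorem with effective error terms away from the exceptional modulus.

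The key steps, in order, are as follows. First I would invoke the explicit form of the prime number theorem for arithmetic progressions coming from zero-free regions of Dirichlet $L$-functions: there is an absolute constant $c>0$ such that, for all $q<\exp(c\sqrt{\log x})$ and all $(a,q)=1$,
\[
\psi(x';q,a)=\frac{x'}{\phi(q)}+O\bigl(x'\exp(-c'\sqrt{\log x})\bigr)
\]
holds uniformly for $x'\le x$, \emph{except} that if some Dirichlet $L$-function has a Siegel zero $\beta_1$ associated to a real character $\chi_1$ modulo $q_1$, one must subtract the corresponding secondary term $\chi_1(a)(x')^{\beta_1}/(\beta_1\phi(q))$ for those $q$ that are multiples of $q_1$. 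By Landau's theorem there is at most one such exceptional modulus $q_1\le \exp(c\sqrt{\log x})$, and it has at most one prime factor exceeding $(\log x)^{O(1)}$; taking $p_0$ to be that prime factor if it exists (and $p_0$ any prime $\gg\log\log x$ otherwise) ensures that every $q$ coprime to $p_0$ in the relevant range is not a multiple of $q_1$, so the secondary term never appears. (One checks $p_0\gg\log\log x$ since $q_1$, if it exists, must be $\gg(\log x)^{c''}$ by Siegel's theorem or even just by the zero-free region.) Second, for the range $\exp(c\sqrt{\log x})\le q<x^{1/2-\epsilon}$ I would split off a dyadic decomposition in $q$ and apply the standard large sieve / Bombieri--Vinogradov machinery; since this range of moduli already has length a power of $x$, the trivial Brun--Titchmarsh bound $\pi(x';q,a)\ll x'/(\phi(q)\log(x/q))$ summed against the main term, together with the $(\log x)^{-A}$-saving Bombieri--Vinogradov estimate for \emph{some} large $A$, comfortably beats $x\exp(-c_0\sqrt{\log x})$ once $c_0$ is chosen small enough (say $c_0\le c'/2$). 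Finally I would convert from $\psi$ to $\pi$ by partial summation, which introduces only acceptable errors, and combine the two ranges of $q$.

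The main obstacle is the bookkeeping around the exceptional zero: one must be careful that the single bad modulus $q_1$ is genuinely unique in the relevant range, that its large prime factor $p_0$ (if present) is indeed $\gg\log\log x$ so that the conclusion $p_0\in[(\log\log x)/2,x]$ holds, and that deleting multiples of $p_0$ really does kill \emph{all} contributions of the Siegel zero (it does, since any modulus carrying the exceptional character must be a multiple of $q_1$ and hence of $p_0$). A secondary technical point is ensuring the error terms are uniform in $x'\le x$ rather than just at $x'=x$; this is standard, since the zero-free-region estimates and the large sieve bounds are both uniform in the truncation point. Everything else is an assembly of classical ingredients, so I would present the proof as a short reduction to Bombieri--Vinogradov plus the effective Siegel--Walfisz theorem, citing a standard reference (e.g. Davenport's \emph{Multiplicative Number Theory}) for the zero-free region input.
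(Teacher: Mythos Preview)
Your overall strategy (remove the exceptional modulus and otherwise run the Bombieri--Vinogradov argument) is exactly right, and your treatment of the small-modulus range and the choice of $p_0$ is essentially the same as the paper's. The gap is in your second step. A saving of $(\log x)^{-A}$ from the black-box Bombieri--Vinogradov theorem does \emph{not} beat $\exp(-c_0\sqrt{\log x})$: for any fixed $A$ one has $x/(\log x)^A \gg x\exp(-c_0\sqrt{\log x})$ for large $x$, so no choice of $c_0$ rescues this. Brun--Titchmarsh is even worse, since $\sum_{q<x^{1/2-\epsilon}} x/\phi(q)\asymp x\log x$. More fundamentally, the split should be by the \emph{conductor} of the primitive character, not by the modulus $q$: a modulus $q$ in your ``large'' range $[\exp(c\sqrt{\log x}),x^{1/2-\epsilon}]$ can still carry characters induced from small conductors, and these are precisely the terms that the large sieve does not control.

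The fix---and this is what the paper does---is to go inside the proof of Bombieri--Vinogradov (Davenport, Chapter~28): reduce to primitive characters, sum over conductors $r$, and split at $r=Q_1:=\exp(2c\sqrt{\log x})$. For $r>Q_1$ the Vaughan identity plus the large sieve gives a contribution $\ll x(\log x)^{O(1)}/Q_1+x^{1-\epsilon}(\log x)^{O(1)}\ll x\exp(-c'\sqrt{\log x})$, which is the source of the first error term in the paper's displayed decomposition. For $r\le Q_1$ one uses the Landau--Page theorem to get $\phi(r)^{-1}\sum^*_\chi|\psi'(x',\chi)|\ll x\exp(-3c\sqrt{\log x})$ for all such $r$ except multiples of the single exceptional $q_1$; summing over the at most $\exp(2c\sqrt{\log x})$ values of $r$ then gives $\ll x\exp(-c\sqrt{\log x})$. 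Taking $p_0$ to be the largest prime factor of $q_1$ (which is $\gg\log\log x$ since $q_1$ is squarefree apart from a bounded factor and satisfies $q_1(\log q_1)^4\gg\log x$) removes the exceptional contribution. So your proof sketch becomes correct once you replace ``Bombieri--Vinogradov with $(\log x)^{-A}$ saving'' by ``the large-sieve part of the Bombieri--Vinogradov machinery, applied to conductors $>Q_1$''.
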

\begin{proof}
This follows from known results on the repulsion of zeros of $L$-functions. Following \cite[Chapter 28]{Davenport}, we have (for a suitable constant $c>0$)
\begin{align}
\sum_{\substack{q<x^{1/2-\epsilon}\\ (q,p_0)=1}}\sup_{\substack{(a,q)=1\\ x'\le x}}\Bigl|\pi(x';q,a)-&\frac{\pi(x')}{\phi(q)}\Bigr|\ll x\exp(-c\sqrt{\log{x}})\nonumber\\
&+\log{x}\sum_{\substack{q<\exp(2c\sqrt{\log{x}})\\ (q,p_0)=1}}\;\sup_{x'\le x}\;\;\sideset{}{^*}\sum_{\chi\pmod{q}}\frac{|\psi'(x',\chi)|}{\phi(q)}.
\end{align}
However, by the Landau-Page theorem \cite[Chapter 20]{Davenport}, we have that if $c$ is sufficiently small then
\begin{equation}
\phi(q)^{-1}\sideset{}{^*}\sum_\chi|\psi'(x',\chi)|\ll x\exp(-3c\sqrt{\log{x}})
\end{equation}
for all $q<\exp(2c\sqrt{\log{x}})$ and $x'\le x$, except possibly those which are a multiple of an exceptional modulus $q_1$. Such an exceptional modulus must satisfy $\log{x}\ll q_1(\log{q_1})^4\ll x$, and must be square-free apart from a factor of at most 4. Taking $p_0$ to be the largest prime factor of $q_1$ then gives the result.
\end{proof}
Let $\mathcal{L}=\{L_1,\dots,L_k\}$ be the $\langle P_3\rangle$-admissible set of Theorem \ref{theorem:Admiss}, $p_0$ be the prime given by Lemma \ref{lemma:BV}, and let
\begin{equation}
W=\prod_{\substack{p\le 2(\log{X})^{1/3}\\p\equiv 3\pmod{4}\\p\ne p_0}}p.
\end{equation}
It will be convenient to choose $w_n$ to be 0 unless $n\equiv v_0\pmod{W}$, where $v_0$ is chosen such that $(L_j(v_0),W)=1$ for each $j$. (Such a $v_0$ exists by the $\langle P_3\rangle$-admissibility of $\{L_1,\dots,L_k\}$ and the Chinese Remainder Theorem.) This allows us to ignore $n$ for which one of the $L_i(n)$ has a small prime factor congruent to $3\pmod{4}$.

We then define our weights $w_n$ by
\begin{equation}
w_n=
\begin{cases}
\Bigl(\sum_{d|\prod_{i=1}^k L_i(n)}\lambda_d\Bigr)^2,\quad &\text{if }n\equiv v_0\pmod{W},\\
0,&\text{otherwise,}
\end{cases}
\end{equation}
for some coefficients $\lambda_d$ (which we will choose explicitly later) satisfying
\begin{equation}
\lambda_d=0\quad \text{if $d>X^{1/10}$ or $\mu^2(d)=0$ or $d\notin\langle P_3\rangle$ or $(d,p_0W)\ne 1$}.
\end{equation}
We recall that $\langle P_1\rangle$ and $\langle P_3\rangle$ denote the sets of integers composed only of prime factors $p\equiv 1\pmod {4}$ and $p\equiv 3\pmod{4}$ respectively. Finally, we define the function $\nu=\nu_{\mathcal{L}}$ on primes by
\begin{equation}
\nu(p)=\#\{1\le n<p:\prod_{i=1}^k L_i(n)\equiv 0\pmod{p}\}.
\end{equation}
We note that since $\{L_1,\dots,L_k\}$ is $\langle P_3\rangle$-admissible, $\nu(p)<p$ for any prime $p\equiv 3\pmod{4}$.
\section{Proof of Theorem \ref{theorem:Admiss}}
\begin{lemma}\label{lemma:Const}
Assume the hypotheses of Theorem \ref{theorem:Admiss} and that $\sup_{d}|\lambda_d|\ll (\log{X})^{k/2}$. Then
\[\sum_{\substack{X< n\le 2X\\n\equiv v_0\pmod{W}}}w_n=\frac{X}{W}\sum_{d,e}\lambda_d\lambda_e\prod_{p|de}\frac{\nu(p)}{p}+O\Bigl(X^{1/5+\epsilon}\Bigr).\]
\end{lemma}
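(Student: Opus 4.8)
The plan is to expand the square defining $w_n$ and swap the order of summation, reducing the count to a sum over pairs $d,e$ (both in $\langle P_3\rangle$, squarefree, coprime to $p_0W$, and $\le X^{1/10}$) of $\lambda_d\lambda_e$ times the number of $n\in(X,2X]$ with $n\equiv v_0\pmod W$ and $[d,e]\mid \prod_{i=1}^k L_i(n)$. First I would note that for a squarefree modulus $m=[d,e]$ with $(m,W)=1$, the congruence condition $m\mid\prod_i L_i(n)$ together with $n\equiv v_0\pmod W$ is, by the Chinese Remainder Theorem, equivalent to $n$ lying in one of $\prod_{p\mid m}\nu(p)$ residue classes modulo $mW$ — here one uses that $(a_i,p)=1$ for $p\mid m$ (since $(2p_0,a_i)=1$ and $a_i\le(\log X)^{1/3}$ so every prime factor of $a_i$ divides $W$ or equals $p_0$, hence is excluded from $m$), so each $L_i$ is a bijection mod $p$ and $\nu$ is genuinely multiplicative on the relevant moduli. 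Each such residue class contributes $X/(mW)+O(1)$ integers in $(X,2X]$.

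Carrying this out gives
\[
\sum_{\substack{X<n\le 2X\\ n\equiv v_0\,(W)}}w_n
=\sum_{d,e}\lambda_d\lambda_e\Bigl(\frac{X}{W}\prod_{p\mid de}\frac{\nu(p)}{p}+O\bigl(\textstyle\prod_{p\mid[d,e]}\nu(p)\bigr)\Bigr),
\]
where I have written $\prod_{p\mid de}\nu(p)/p=\prod_{p\mid[d,e]}\nu(p)/p$ since $d,e$ are squarefree. The main term is exactly the claimed expression, so it remains to bound the total error. Since $\mathcal L$ is a set of $k$ linear functions we have the crude bound $\nu(p)\le k$ for all $p$, hence $\prod_{p\mid[d,e]}\nu(p)\le k^{\omega([d,e])}\le d e$ (as $k\le(\log X)^{1/5}$ and $d,e\ge$ their radicals; more simply $k^{\omega(de)}\le (de)^{\log k/\log 2}$, but the trivial $\le de$ suffices after we also use $d,e\le X^{1/10}$). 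Thus the error is
\[
\ll \Bigl(\sup_d|\lambda_d|\Bigr)^2\sum_{d,e\le X^{1/10}} de \ll (\log X)^{k} \cdot X^{1/10}\cdot X^{1/10}\cdot X^{1/10}\cdot X^{1/10},
\]
which is far worse than claimed, so I would instead be less wasteful: bound $\prod_{p\mid[d,e]}\nu(p)\le k^{\omega(de)}$ and use $\sum_{d,e\le X^{1/10}}k^{\omega(de)}\ll X^{1/5}(\log X)^{O(k^2)}=X^{1/5+o(1)}$ together with $\sup_d|\lambda_d|\ll(\log X)^{k/2}$, noting $k\le(\log X)^{1/5}$ makes $(\log X)^{O(k^2)}=X^{o(1)}$. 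This yields the total error $O(X^{1/5+\epsilon})$.

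The only genuinely delicate point is the multiplicativity/CRT bookkeeping: one must check that the support restrictions on $\lambda_d$ ($d$ squarefree, $d\in\langle P_3\rangle$, $(d,p_0W)=1$) guarantee $(a_i,p)=1$ for every prime $p\mid d$ and every $i$, so that $\nu(p)$ counts exactly the roots of $\prod_i L_i$ mod $p$ and the count of $n$ in an arithmetic progression mod $W$ subject to $[d,e]\mid\prod_i L_i(n)$ splits as a product over primes. This is where the hypotheses $(2p_0,a_i)=1$ and $a_i\le(\log X)^{1/3}\le\tfrac12\cdot 2(\log X)^{1/3}$ (so all prime factors of $a_i$ are $\le 2(\log X)^{1/3}$ and congruent to something — if $\equiv 3\pmod 4$ they divide $W$ unless equal to $p_0$, if $\equiv 1\pmod 4$ they are coprime to $d\in\langle P_3\rangle$ anyway) get used; I would state this as a short preliminary observation before the main computation. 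Everything else — expanding the square, swapping sums, the $O(1)$ per residue class, and the final crude divisor-sum bound — is routine.
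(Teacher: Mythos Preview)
Your proposal is correct and follows essentially the same route as the paper: expand the square, swap the order of summation, use CRT to count that $n$ lies in $\prod_{p\mid[d,e]}\nu(p)$ residue classes modulo $W[d,e]$ each contributing $X/(W[d,e])+O(1)$, and bound the accumulated $O(1)$'s via $\nu(p)\le k$, $\sup_d|\lambda_d|\ll(\log X)^{k/2}$, and $k\le(\log X)^{1/5}$. Your flagged ``delicate point'' about $(a_i,p)=1$ is in fact unnecessary---$\nu(p)$ is by definition the number of residue classes $n\bmod p$ with $p\mid\prod_iL_i(n)$, and multiplicativity over squarefree $[d,e]$ coprime to $W$ follows from CRT alone---so the paper omits it; but the observation is harmless.
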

\begin{proof}
From the definition of $w_n$, we have
\begin{equation}
\sum_{\substack{X< n\le 2X\\n\equiv v_0\pmod{W}}}w_n=\sum_{d,e}\lambda_d\lambda_e\sum_{\substack{X< n\le 2X\\n\equiv v_0\pmod{W}\\ d,e|\prod_{i=1}^k L_i(n)}}1.
\end{equation}
We concentrate on the inner sum. For each prime $p_1|de$, we must have $L_i(n)\equiv 0\pmod{p_1}$ for some $1\le i\le k$, and so $n$ can lie in one of $\nu(p_1)$ residue classes. Thus, using the Chinese remainder theorem, we can rewrite the inner sum as a sum of $\prod_{p|de}\nu(p)$ sums of $n$ in a single residue class $\pmod{W[d,e]}$, where $[d,e]$ is the least common multiple of $d$ and $e$ (since $\lambda_d=0$ if $(d,W)\ne 1$ or $d$ is not square-free). The number of integers $X<n\le 2X$ counted in any such residue class is then $X/([d,e]W)+O(1)$. Putting this together gives
\begin{align}
\sum_{d,e}\lambda_d\lambda_e\sum_{\substack{X< n\le 2X\\n\equiv v_0\pmod{W}\\ d,e|\prod_{i=1}^k L_i(n)}}1&=\sum_{d,e}\lambda_d\lambda_e \Bigl(\prod_{p|de}\nu(p)\Bigr)\Bigl(\frac{X}{W[d,e]}+O(1)\Bigr).
\end{align}
We see that the main term gives the corresponding main term in the statement of the lemma. Since $\nu(p)\le k$, the contribution from the error term is
\begin{align}
\ll \sup_{d}|\lambda_d|^2\Bigl(\sum_{d<X^{1/10}}\prod_{p|d}k\Bigr)^2&\le \sup_{d}|\lambda_d|^2\Bigl(X^{1/10}\sum_{d<X}\frac{\prod_{p|d}k}{d}\Bigr)^2\nonumber\\
&\ll \sup_{d}|\lambda_d|^2X^{1/5}(1+\log{X})^{2k}.
\end{align}
By assumption of the Lemma, $k\le (\log{X})^{1/3}$ and $\sup_{d}|\lambda_d|\ll (\log{X})^{k/2}$, so this contribution is $O(X^{1/5+\epsilon})$.
\end{proof}
\begin{lemma}\label{lemma:Squares}
Assume the hypotheses of Theorem \ref{theorem:Admiss} and that $\sup_{d}|\lambda_d|\ll (\log{X})^{k/2}$. Then
\begin{align*}
\sum_{\substack{X< n\le 2X\\n\equiv v_0\pmod{W}}}\#\Bigl\{i:L_i(n)\in\mathcal{S}\Bigr\}w_n&\gg \frac{X k}{\phi(W)(\log{X})^{1/2}}\sum_{d,e}\lambda_d\lambda_e\prod_{p|de}\frac{\nu(p)-1}{p-1}\\
&\qquad+O(X\exp(-c\sqrt{\log{X}}))
\end{align*}
for some absolute constant $c>0$.
\end{lemma}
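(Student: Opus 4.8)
The plan is to expand $\#\{i:L_i(n)\in\mathcal{S}\}=\sum_{i=1}^k\mathbf{1}_{\mathcal{S}}(L_i(n))$, swap the order of summation, and estimate, for each fixed $i$, the quantity
\[
\sum_{\substack{X<n\le 2X\\ n\equiv v_0\pmod W}}\mathbf{1}_{\mathcal{S}}(L_i(n))\Bigl(\sum_{d\mid\prod_j L_j(n)}\lambda_d\Bigr)^2
=\sum_{d,e}\lambda_d\lambda_e\sum_{\substack{X<n\le 2X\\ n\equiv v_0\pmod W\\ d,e\mid \prod_j L_j(n)}}\mathbf{1}_{\mathcal{S}}(L_i(n)).
\]
For the inner sum, I would split $n$ into residue classes: the conditions $d,e\mid\prod_j L_j(n)$ together with $n\equiv v_0\pmod W$ pin $n$ down to residue classes modulo $W[d,e]$, and by the Chinese Remainder Theorem the number of admissible classes is $\prod_{p\mid de}\nu(p)$ when $(de,a_i)=1$; one then needs to count $n$ in an arithmetic progression modulo $W[d,e]$ for which $L_i(n)=a_in+b_i$ is a sum of two squares. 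Since $\lambda_d$ is supported on squarefree $d\in\langle P_3\rangle$ coprime to $p_0W$, and since $L_i(n)$ runs over an arithmetic progression, the count of $n\le X$ with $L_i(n)\in\mathcal{S}$ in such a progression is governed by the distribution of $\mathcal{S}$ in arithmetic progressions — which is where Lemma~\ref{lemma:BV} enters, via the standard identification of $\mathcal{S}$-counting with a Selberg–Delange / half-dimensional sieve driven by primes $\equiv 3\pmod 4$.

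The key step is to pass from "$n$ in a progression mod $W[d,e]$ with $L_i(n)\in\mathcal{S}$" to a main term proportional to
\[
\frac{X}{\phi(W[d,e])}\cdot\frac{\mathfrak{S}}{(\log X)^{1/2}}\cdot(\text{local densities}),
\]
with an acceptable error after summing over $d,e$. Here one writes $\mathbf{1}_{\mathcal{S}}(m)$ in terms of the characteristic function of integers all of whose prime factors $\equiv 3\pmod 4$ occur to even order; an application of a lower-bound (or Bombieri–Vinogradov-type) estimate for $\mathcal{S}$ in arithmetic progressions — obtainable by Hooley's / Iwaniec's treatment together with the zero-free region input of Lemma~\ref{lemma:BV} to handle a possible Siegel zero — yields the density $\mathfrak{S}/(\log X)^{1/2}$ uniformly for moduli up to $X^{1/2-\epsilon}$. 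Since all $d,e<X^{1/10}$ and $W=X^{o(1)}$, the moduli $W[d,e]$ are comfortably below $X^{1/2-\epsilon}$, so the cumulative error is $\ll X\exp(-c\sqrt{\log X})$ by the same argument as in the proof of Lemma~\ref{lemma:BV}; here one uses $k\le(\log X)^{1/5}$, $\sup_d|\lambda_d|\ll(\log X)^{k/2}$, and $\nu(p)\le k$ to bound the contribution of the $\prod_{p\mid de}\nu(p)$ factors, exactly as in Lemma~\ref{lemma:Const}.

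After extracting the main term, the local factor at each prime $p\mid de$ should come out to be $(\nu(p)-1)/(p-1)$: one of the $\nu(p)$ residue classes forced by $p\mid L_j(n)$ for some $j$ is removed because if $p\mid L_i(n)$ (and $p\equiv 3\pmod 4$) then $L_i(n)$ is \emph{not} a sum of two squares unless $p^2\mid L_i(n)$, whose contribution is lower order; the remaining $\nu(p)-1$ classes each survive with relative density $p/(p-1)$ coming from the $\phi$ in the denominator versus the naive $1/p$. Summing $\mathbf{1}_{\mathcal{S}}(L_i(n))$ back over $i=1,\dots,k$ introduces the factor $k$, the $W$-factor assembles into $\phi(W)$ (rather than $W$) because the relevant density at primes dividing $W$ is $(p-1)/p$ when $\nu(p)=0$ for the removed primes, or more precisely because $(L_j(v_0),W)=1$ forces each $L_i(n)$ to avoid the small primes $\equiv 3\pmod 4$. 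The main obstacle is the uniformity in the modulus of the equidistribution of $\mathcal{S}$ in arithmetic progressions together with the careful bookkeeping of the local densities at ramified primes (those $p$ with $p\mid a_i$ for some $i$, or $p\mid\mathrm{disc}$), ensuring these contribute only to the error term; the Siegel-zero issue is precisely what Lemma~\ref{lemma:BV} was designed to absorb, and everything else is the routine sieve manipulation already rehearsed in Lemma~\ref{lemma:Const}.
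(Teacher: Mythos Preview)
Your overall strategy---expand the square, swap sums, reduce to a progression count---is right, and your explanation of why the local factor is $(\nu(p)-1)/(p-1)$ is essentially correct. But there is a gap at the crucial step: you invoke ``a lower-bound (or Bombieri--Vinogradov-type) estimate for $\mathcal{S}$ in arithmetic progressions'' and assert it is ``obtainable by Hooley's / Iwaniec's treatment together with the zero-free region input of Lemma~\ref{lemma:BV}''. That lemma, however, is a statement about \emph{primes} in arithmetic progressions, not about $\mathcal{S}$; deducing an equidistribution result for $\mathcal{S}$ with error $O(X\exp(-c\sqrt{\log X}))$ uniformly for moduli up to $X^{1/2-\epsilon}$ (with only the exceptional prime $p_0$ excluded) is not automatic and is never carried out here. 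As written, the heart of the argument is a black box you have not supplied.

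The paper sidesteps this entirely by a device you are missing. Since only a lower bound is needed, it replaces $\mathcal{S}$ by the explicit subset
\[
\mathcal{S}'=\{n\in\langle P_1\rangle:\ n=mp,\ m<X^{1/3},\ p\text{ prime}\},
\]
so that $\mathbf{1}_{\mathcal{S}}(L_j(n))\ge \mathbf{1}_{\mathcal{S}'}(L_j(n))$. Writing $L_j(n)=mp$ and summing over $m\in\langle P_1\rangle$, $m<X^{1/3}$, the inner count becomes a count of \emph{primes} $p\equiv 1\pmod 4$ in an arithmetic progression to modulus $4a_jW[d,e]\ll X^{1/4}$, and Lemma~\ref{lemma:BV} applies directly; the sum over $m$ then produces the factor $(\log X)^{-1/2}$ by partial summation. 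In this setup the $\nu(p)-1$ arises because, among the $\nu(p_1)$ residue classes for $n\pmod{p_1}$, exactly one (namely $n\equiv -\overline{a_j}b_j$) corresponds to $p\equiv 0\pmod{p_1}$, which is not a primitive class and hence is dropped when counting primes---a slightly different mechanism from the one you describe, but yielding the same local factor. This reduction to primes is the missing idea; with it the proof is self-contained and needs no new equidistribution theorem for $\mathcal{S}$.
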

\begin{proof}
%
We first obtain a lower bound by only counting a subset of $\#\{1\le i\le k:L_i(n)\in\mathcal{S}\}$ which will be more convenient for our manipulations (we lose an unimportant constant factor in doing this). Let $\mathcal{S}'\subseteq\mathcal{S}$ be given by
\begin{equation}
\mathcal{S}'=\{n\in\langle P_1\rangle:n=mp\text{ for some $m<X^{1/3}$ and some  prime $p$}\}.
\end{equation} 
We obtain a lower bound by only counting elements of $\mathcal{S}'$. This gives
\begin{align}
\sum_{\substack{X< n\le 2X\\n\equiv v_0\pmod{W}}}&\#\Bigl\{i:L_i(n)\in\mathcal{S}\Bigr\}w_n
\ge \sum_{\substack{X< n\le 2X\\n\equiv v_0\pmod{W}}}\#\Bigl\{1\le i\le k:L_i(n)\in\mathcal{S}'\Bigr\}w_n\nonumber\\
&\qquad\qquad=\sum_{j=1}^k\sum_{\substack{X< n\le 2X\\n\equiv v_0\pmod{W}}}\mathbf{1}_{\mathcal{S}'}(L_j(n))\Bigl(\sum_{d|\prod_{i=1}^k L_i(n)}\lambda_d\Bigr)^2.
\end{align}
By definition, if $L_j(n)=a_jn+b_j\in\mathcal{S}'$, then $L_j(n)$ can be written uniquely as $mp$ for some $m<X^{1/3}$ composed only of primes congruent to $1\pmod{4}$ and with $(m,a_j)=1$, and some prime $p\equiv 1\pmod{4}$. Making this substitution and rearranging the sums, we are left to estimate
\begin{align}
\sum_{j=1}^k\sum_{\substack{m\le X^{1/3}\\ m\in \langle P_1\rangle \\ (m,a_j)=1}}\sum_{\substack{d,e<X^{1/10}\\ d,e\in \langle P_3\rangle}}\lambda_d\lambda_e\sum_{\substack{(a_jX+b_j)/m<p<(2a_jX+b_j)/m\\ p\equiv 1\pmod{4}\\ p\equiv \overline{m}(a_jv_0+b_j)\pmod{a_jW}\\ [d,e]|\prod_{i=1}^k L_i((pm-b_j)/a_j)}}1.
\end{align}
For each prime $p_1|de$ with $p_1>(\log{x})^{1/3}$, there are $\nu(p)-1$ possible primitive residue classes for $p\pmod{p_1}$ such that $\prod_{i=1}^k L_i((mp-b_j)/a_j)\equiv 0\pmod{p_1}$. (All $\nu(p)$ residue classes for which $\prod_{i=1}^k L_i(n)\equiv 0\pmod{p_1}$ correspond to a primitive residue class for $p$ except for $n\equiv -\overline{a_j}b_j\pmod{p_1}$, and these all exist since $a_i<(\log{X})^{1/3}<p_1$ and $p_1\equiv 3\pmod{4}$ so $p_1\nmid m$.) Thus, by the Chinese remainder theorem, we can rewrite the inner sum as a sum of $\prod_{p|de}(\nu(p)-1)$ different sums of primes in arithmetic progressions to modulus $4 a_j W\prod_{p|de}p$ (since $2\nmid a_j$). Thus the inner sum is
\begin{align}
&\frac{1}{\phi(4 a_j W)}\Bigl(\pi\Bigl(\frac{2a_j X+b_j}{m}\Bigr)-\pi\Bigl(\frac{a_j X+b_j}{m}\Bigr)\Bigr)\prod_{p|de}\frac{\nu(p)-1}{p-1}\nonumber\\
&\qquad\qquad+O\Bigl(E\Bigl(\frac{3 a_j X}{m};4 a_j W[d,e]\Bigr)\prod_{p|de}k\Bigr),\label{eq:PrimesInAP}
\end{align}
where
\begin{equation}
E(x;q)=\sup_{\substack{(a,q)=1\\ x'\le x}}\Bigl|\pi(x';q,a)-\frac{\pi(x')}{\phi(q)}\Bigr|,\qquad [d,e]=\prod_{p|de}p.
\end{equation}
We first consider the error term of \eqref{eq:PrimesInAP}. Letting $r=4a_j W[d,e]\ll X^{1/4}$ (noting that $(r,p_0)=1$) and $\lambda_{max}=\sup_{d}|\lambda_d|$, this contributes at most
\begin{align}
&k \lambda_{\text{max}}^2\sum_{m<X^{1/3}}\sum_{\substack{r\ll X^{1/4}\\ (r,p_0)=1}}E\Bigl(\frac{3 a_j X}{m};r\Bigr)\prod_{p|r}3k\nonumber\\
&\le k\lambda_{\text{max}}^2\sum_{m<X^{1/3}}\Bigl(\frac{3 a_j X}{m}\sum_{r<4 W X^{1/5}}\frac{\prod_{p|r}9k^2}{r}\Bigr)^{1/2}\Bigl(\sum_{\substack{r<4 W X^{1/5}\\ (r,p_0)=1}}E\Bigl(\frac{3 a_j X}{m};r\Bigr)\Bigr)^{1/2}\nonumber\\
&\ll k \lambda_{max}^2 \Bigl( a_j X(9k^2+\log{X})^{9k^2}\Bigr)^{1/2}\Bigl(a_jX\exp(-c_0\sqrt{\log{X^{2/3}}})\Bigr)^{1/2}\sum_{m<X^{1/3}}\frac{1}{m}.\label{eq:APError}
\end{align}
Here we have used Cauchy-Schwarz and the trivial bound $E(X,q)\ll X/q$ for $q<X$ in the first line, and Lemma \ref{lemma:BV} in the second. Thus, since $k\le (\log{X})^{1/5}$, $a_j\le (\log{X})^{1/3}$ and $\lambda_{\text{max}}\ll (\log{X})^{k/2}$ by assumption of the Lemma, the contribution from \eqref{eq:APError} is $O(X\exp(-\frac{c_0}{4}\sqrt{\log{X}}))$.

We now consider the contribution from the main term of \eqref{eq:PrimesInAP}. The main term factorizes, simplifying to
\begin{equation}
\frac{1}{\phi(4W)}\Bigl(\prod_{p|a_j,p\nmid W}\frac{p}{p-1}\Bigr)\Bigl(\sum_{\substack{m<X^{1/3}\\ m\in \langle P_1\rangle \\ (m,a_j)=1}}\frac{X+o(X)}{m\log{\frac{X}{m}}}\Bigr)\Bigl(\sum_{d,e}\lambda_d\lambda_e\prod_{p|de}\frac{\nu(p)-1}{p-1}\Bigr).
\end{equation}
The second term is straightforward to evaluate. For $t>X^\epsilon$ we have that
\begin{equation}
\sum_{\substack{m<t\\ m\in\langle P_1\rangle \\ (m,a_j)=1}}1\gg t(\log{t})^{-1/2} \prod_{\substack{p|a_j\\ p\equiv 1\pmod{4}}}\frac{p-1}{p},
\end{equation}
and so by partial summation we see that 
\begin{equation}
\sum_{\substack{m<X^{1/3}\\ m\in \langle P_1\rangle \\ (m,a_j)=1}}\frac{X+o(X)}{m\log{\frac{X}{m}}}\gg X(\log{X})^{-1/2} \prod_{\substack{p|a_j\\ p\equiv 1\pmod{4}}}\frac{p-1}{p}.\end{equation}
Noting that the products over prime divisors of $a_j$ cancel, this completes the proof of the Lemma.
\end{proof}
\begin{lemma}\label{lemma:Summation}
Fix $A>0$. Let $1/A\le \kappa\le (\log{R})^{1/3}$ and let $f:[0,1]\rightarrow \mathbb{R}$ be a smooth non-negative function. Let $\gamma(p)$ satisfy $0\le \gamma(p)\le \min(A\kappa,(1-1/A)p)$ for all $p<R$, and
\[-L\le \sum_{w<p<z}\frac{\gamma(p)\log{p}}{p}-\kappa \log{z/w}\le A\]
for all $2\le w\le z<R$. Then
\begin{align*}
\sum_{r<R}\mu^2(r)&\Bigl(\prod_{p|r}\frac{\gamma(p)}{p-\gamma(p)}\Bigr)f\Bigl(\frac{\log{r}}{\log{R}}\Bigr)\\
&=\mathfrak{S}_\gamma \frac{(\log{R})^{\kappa}}{\Gamma(\kappa)}\int_0^1f(t)t^{\kappa-1}dt\Bigl(1+O_{A,f}\Bigl(\frac{\kappa L+\kappa ^2\log\log{R}}{\log{R}}\Bigr)\Bigr),
\end{align*}
where
\[\mathfrak{S}_\gamma=\prod_{p<R}\Bigl(1-\frac{\gamma(p)}{p}\Bigr)^{-1}\Bigl(1-\frac{1}{p}\Bigr)^{\kappa}.\]
\end{lemma}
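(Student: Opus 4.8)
The plan is to prove Lemma~\ref{lemma:Summation} by a standard Perron/contour-integral argument applied to the Dirichlet series generating the arithmetic function
\[
h(r)=\mu^2(r)\prod_{p\mid r}\frac{\gamma(p)}{p-\gamma(p)},
\]
combined with a comparison to the ``model'' series whose local factors are $(1-1/p)^{-\kappa}$. Concretely, I would first introduce the smoothed sum
\[
M(R)=\sum_{r<R} h(r)\, f\!\left(\frac{\log r}{\log R}\right)
\]
and write $f(\log r/\log R)$ via the Mellin transform of $f$ (or, more elementarily, approximate $f$ by a finite combination of functions $t\mapsto t^{\sigma}$ / use integration by parts to reduce to the indicator case and recover the smooth weight by partial summation). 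The associated Dirichlet series is
\[
D(s)=\sum_{r\ge 1}\frac{h(r)}{r^s}=\prod_{p}\left(1+\frac{\gamma(p)}{(p-\gamma(p))p^s}\right),
\]
and the key analytic input is that $D(s)\zeta(s+1)^{-\kappa}$ extends to a region to the left of $\mathrm{Re}(s)=0$ and is bounded there (uniformly in the relevant parameters), because the hypothesis
\[
-L\le \sum_{w<p<z}\frac{\gamma(p)\log p}{p}-\kappa\log(z/w)\le A
\]
says precisely that on average $\gamma(p)\approx\kappa$, so that the ``excess'' Euler product $\prod_p\bigl(1+\gamma(p)/((p-\gamma(p))p^s)\bigr)(1-p^{-s-1})^{\kappa}$ converges in a half-plane slightly larger than $\mathrm{Re}(s)>-\delta$. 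This gives $D(s)\sim \mathfrak{S}_\gamma\,\zeta(s+1)^{\kappa}$ near $s=0$, and since $\zeta(s+1)^{\kappa}$ has a branch-point singularity of the form $s^{-\kappa}$ at $s=0$, a contour shift (Hankel contour around the cut) produces the main term $\mathfrak{S}_\gamma(\log R)^{\kappa}/\Gamma(\kappa)\cdot\int_0^1 f(t)t^{\kappa-1}dt$, by the standard Mellin–Barnes / Selberg–Delange evaluation
\[
\frac{1}{2\pi i}\int_{(c)}\frac{R^{s}}{s^{\kappa+1}}\,ds=\frac{(\log R)^{\kappa}}{\Gamma(\kappa+1)}.
\]

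The steps, in order: (i) Record the Euler-product identity for $D(s)$ and factor $D(s)=\zeta(s+1)^{\kappa}G(s)$ with $G$ holomorphic and bounded for $\mathrm{Re}(s)\ge-\delta$, $|\mathrm{Im}(s)|$ small, proving $G(0)=\mathfrak{S}_\gamma$; here I use $0\le\gamma(p)\le\min(A\kappa,(1-1/A)p)$ to control the small primes trivially and the averaging hypothesis to control the tail, tracking the dependence on $L$ and $\kappa$. (ii) Express $M(R)$ as a contour integral: write $f$ in terms of its Mellin transform $\tilde f(s)=\int_0^\infty f(t)t^{s-1}\,dt$ (extended by $f\equiv0$ outside $[0,1]$, so $\tilde f$ is entire of rapid decay on vertical lines since $f$ is smooth and compactly supported away from $+\infty$ — one uses $f$ smooth to get decay, integrating by parts), so that $f(\log r/\log R)=\frac{1}{2\pi i}\int_{(c)}\tilde f(s)\,(\log R)^{-s}\,r^{-s/\log R}\cdot(\text{appropriate substitution})$; more cleanly, set $s=w/\log R$ to get
\[
M(R)=\frac{1}{2\pi i}\int_{(c)} D\!\left(\frac{w}{\log R}\right)\tilde f(w)\,\frac{dw}{1}
\]
after the change of variables, so everything is expressed through $D$ evaluated near the point $w/\log R\to 0$. (iii) Shift the contour past $w=0$, picking up the contribution of the branch cut of $\zeta(1+w/\log R)^{\kappa}\sim(\log R/w)^{\kappa}$; combining with $\tilde f(w)$ and the Hankel-contour identity yields the main term with $\Gamma(\kappa)$ in the denominator. (iv) Bound the shifted contour and the horizontal connectors using the boundedness of $G$ and the decay of $\tilde f$, producing the claimed relative error $O_{A,f}\bigl((\kappa L+\kappa^2\log\log R)/\log R\bigr)$; the $\kappa^2\log\log R$ term arises from the standard zeta-region estimate $\log\zeta(1+w)=\log(1/w)+O(|w|)$ iterated and from the size of $G'$ near $0$, while the $\kappa L$ term is exactly the error the one-sided hypothesis allows in $G$.

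The main obstacle will be step (iv): obtaining the error term with the correct \emph{uniformity} in $\kappa$ (which is allowed to grow like $(\log R)^{1/3}$) and in $L$. This requires being careful that the implied constant in $G(s)=\mathfrak{S}_\gamma(1+O(\cdot))$ near $s=0$ depends on $A$ and $L$ in the right way — in particular the bound $\log|G(s)/G(0)|\ll \kappa L/\log R + \kappa^2\log\log R/\log R$ when $|s|\ll 1/\log R$ — and that the contour can be taken at distance $\asymp1/\log R$ from the origin so that $\zeta(1+w/\log R)^{\kappa}$ is of size $(\log R)^{O(\kappa)}$, not worse, despite $\kappa$ being large. An alternative that avoids some contour bookkeeping is the more hands-on Levin–Fainleib / Wirsing style approach: establish a ``differential'' inequality for $\sum_{r<t}h(r)$ by writing $\sum_{r<t}h(r)\log r$ via the identity $h(r)\log r=\sum_{p\mid r}h(r/p)\gamma(p)\log p/(p-\gamma(p))$ and feeding in the hypothesis on $\sum\gamma(p)\log p/p$, then integrating the resulting inequality; this reproduces the $(\log R)^\kappa/\Gamma(\kappa)$ factor and is arguably cleaner for tracking the $L$ and $\kappa\log\log R$ dependence, at the cost of the smooth weight $f$ having to be reinserted by partial summation at the end. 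I would present the contour-integral version as the primary argument and remark that the differential-inequality method gives the same conclusion.
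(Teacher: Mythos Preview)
The paper does not actually prove this lemma: it simply cites it as a uniform-in-$\kappa$ version of \cite[Lemma~5.4]{HalberstamRichert}, with the explicit $\kappa$-dependence worked out in \cite[Lemma~5.1]{FordPratt}, and then observes that the case of a general smooth weight $f$ follows from the case $f\equiv 1$ by partial summation. Both of those references proceed by the elementary Levin--Fa\u{\i}nleib/Wirsing route (your ``alternative''): one writes $\sum_{r<t}h(r)\log r$ via $\log r=\sum_{p\mid r}\log p$, feeds in the hypothesis on $\sum \gamma(p)\log p/p$, and integrates the resulting approximate differential equation for $\sum_{r<t}h(r)$. So your secondary plan is essentially the argument behind the cited results, and your final step (recover $f$ by partial summation) matches the paper exactly.

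Your primary plan via Selberg--Delange/contour shifting is a genuine alternative and is in principle sound, but two points deserve care. First, your Mellin setup is shaky as written: $f$ is only assumed smooth on $[0,1]$ with no vanishing at the endpoints, so $\tilde f(s)=\int_0^1 f(t)t^{s-1}\,dt$ will \emph{not} have rapid decay on vertical lines (integration by parts produces boundary terms), and the formula you wrote for $M(R)$ in terms of $D(w/\log R)\tilde f(w)$ is garbled. The clean fix is exactly what the paper does: prove the case $f=1$ first and then apply partial summation, which costs only a factor $O_f(1)$. Second, the uniformity in $\kappa\le(\log R)^{1/3}$ is genuinely the crux, and the standard Selberg--Delange error terms carry factors of the form $C^\kappa$ that must be controlled; your idea of keeping the contour at distance $\asymp 1/\log R$ from the origin is the right one, but turning this into the stated error $O((\kappa L+\kappa^2\log\log R)/\log R)$ requires more than you have written. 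The elementary iteration in Halberstam--Richert/Ford--Konyagin--Luca handles this dependence more transparently, which is presumably why the paper points there rather than to a contour argument.
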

\begin{proof}
For $f=1$, this is a version of \cite[Lemma 5.4]{HalberstamRichert}, with the dependence on $\kappa$ kept explicit. In particular, this is obtained in the proof of \cite[Lemma 5.1]{FordPratt}. The case of $f\ne 1$ then follows immediately by partial summation.
\end{proof}

\begin{lemma}\label{lemma:Ratio}
Let $k\le (\log{X})^{1/5}$, and let $\lambda_d$ be defined by 
\[\lambda_d=\mu(d)\Bigl(\prod_{p|d}\frac{p}{\nu(p)}\Bigr)\sum_{d|r}y_r\Bigl(\prod_{p|r}\frac{\nu(p)}{p-\nu(p)}\Bigr),\]
for $(d,p_0W)=1$, where
\[ y_r=\begin{cases}\mu(r)^2,\qquad &\text{if }r\in\langle P_3\rangle,\, (r,W p_0)=1,\, r<X^{1/10},\\
0,&\text{otherwise.}\end{cases}\]
Then
\[\sum_{d,e}\lambda_d\lambda_e\prod_{p|de}\frac{\nu(p)-1}{p-1}\gg k^{-1/2}(\log{X})^{1/2}\frac{\phi(W)}{W}\sum_{d,e}\lambda_d\lambda_e\prod_{p|de}\frac{\nu(p)}{p}\gg 1.\]
\end{lemma}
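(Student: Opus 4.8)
The statement is the standard GPY/Selberg-sieve diagonalisation, so the plan is to pass from the $\lambda_d$ to the $y_r$ variables, diagonalise both bilinear forms, and then estimate the resulting sums over $r$ using Lemma~\ref{lemma:Summation} with appropriately chosen $\gamma(p)$.

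\emph{First}, I would diagonalise the denominator. Substituting the definition of $\lambda_d$ into $\sum_{d,e}\lambda_d\lambda_e\prod_{p\mid de}\nu(p)/p$ and carrying out the standard manipulation (expand $\prod_{p\mid de}\nu(p)/p = \prod_{p\mid d}\cdot\prod_{p\mid e}/\prod_{p\mid(d,e)}$, interchange orders of summation, and use the multiplicative identity $\sum_{d\mid (r,r')}\mu(d)\prod_{p\mid d}\frac{p}{\nu(p)}\prod_{p\mid(d,e)}\frac{\nu(p)}{p}\cdots$), the cross terms collapse and one is left with a diagonal form
\[
\sum_{d,e}\lambda_d\lambda_e\prod_{p\mid de}\frac{\nu(p)}{p}
= \sum_{r}\frac{y_r^2}{\prod_{p\mid r}\frac{p-\nu(p)}{\nu(p)}\cdot\frac{p}{\nu(p)}}\cdot(\text{something}),
\]
more precisely $= \sum_r y_r^2 \prod_{p\mid r}\frac{\nu(p)}{p-\nu(p)}\cdot\frac{1}{\,}$ up to the multiplicative weights built into the change of variables; since $y_r=\mu^2(r)$ on the support this is $\sum_{r<X^{1/10},\,r\in\langle P_3\rangle,\,(r,Wp_0)=1}\prod_{p\mid r}\frac{\nu(p)}{p-\nu(p)}$. \emph{Second}, I would do the same for the numerator $\sum_{d,e}\lambda_d\lambda_e\prod_{p\mid de}\frac{\nu(p)-1}{p-1}$; here the change of variables is not orthogonal with respect to the same inner product, so one introduces auxiliary variables $y_r^{(j)}$ or simply estimates the off-diagonal contribution and shows the diagonal term $\gg \sum_r y_r^2 \prod_{p\mid r}\frac{\nu(p)}{p-\nu(p)}\cdot\prod_{p\mid r}\frac{p}{p-1}\cdot\frac{\nu(p)-1}{\nu(p)}$ dominates — equivalently, the numerator diagonalises to a sum of the form $\sum_r z_r^2\prod_{p\mid r}\frac{(\nu(p)-1)p}{(p-1)(p-\nu(p))}$ where $z_r$ is $y_r$ shifted by a simple multiplicative factor and supported on $r<X^{1/10}$.

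\emph{Third}, I apply Lemma~\ref{lemma:Summation} to each of the two sums over $r$. For the denominator sum one takes $\gamma(p)=\nu(p)$ for $p\equiv 3\pmod 4$, $p\nmid Wp_0$ (and $\gamma(p)=0$ otherwise), $R=X^{1/10}$, and $f\equiv 1$; the hypothesis $-L\le \sum_{w<p<z}\frac{\nu(p)\log p}{p}-\kappa\log(z/w)\le A$ holds with $\kappa = k/2$ and bounded $L,A$ because $\nu(p)=k$ for all but finitely many primes (those dividing some $a_i$ or some $b_i-b_j$, of which there are $O(k^2)$, each contributing $O(\log p / p)$, with the sum over them being $O(k^2\log\log X/\log X)$-controlled — here the restriction $k\le(\log X)^{1/5}$ and the size bounds on $a_i,b_i$ are exactly what make the error term $O((\kappa L+\kappa^2\log\log R)/\log R)=o(1)$). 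This gives the denominator sum $\asymp \mathfrak S_\gamma (\log X)^{k/2}/\Gamma(k/2)$. For the numerator sum the analogous choice gives an extra density factor; the key point is that the ratio of the two $\Gamma$-factors and the two singular series is, up to bounded constants, $k^{-1/2}(\log X)^{1/2}\phi(W)/W$, which is the claimed first inequality. \emph{Fourth}, the second inequality $\sum\lambda_d\lambda_e\prod\frac{\nu(p)}{p}\gg 1$: by the diagonalisation in the first step this sum equals (a positive multiple of) $\sum_r \mu^2(r)\prod_{p\mid r}\frac{\nu(p)}{p-\nu(p)}$ over the stated range, which is $\ge 1$ just from the $r=1$ term, or more usefully $\gg (\log X)^{k/2}$ via Lemma~\ref{lemma:Summation} again.

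\emph{Main obstacle.} The routine part is the algebraic diagonalisation; the part requiring genuine care is verifying the hypotheses of Lemma~\ref{lemma:Summation} uniformly in $k$ — i.e.\ controlling $L$ and the quantity $\kappa L + \kappa^2\log\log R$ — and, in the numerator, handling the fact that the form $\prod_{p\mid de}\frac{\nu(p)-1}{p-1}$ does not diagonalise under the \emph{same} substitution that diagonalises $\prod_{p\mid de}\frac{\nu(p)}{p}$. I expect to resolve the latter by the standard GGPY device: either introduce a second family of variables adapted to the numerator form and relate the two via a one-variable sum (losing only the harmless factor $k^{-1/2}$ from a local factor $\prod_{p}(1-1/\nu(p))$-type discrepancy), or bound the cross terms directly and show the diagonal dominates. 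The uniformity in $k$ up to $(\log X)^{1/5}$ is exactly where the hypothesis $k\le(\log X)^{1/5}$ and $a_i\le(\log X)^{1/3}$ are used, and tracking that the exceptional primes contribute a bounded amount to $L$ is the one place a little real estimation is needed rather than bookkeeping.
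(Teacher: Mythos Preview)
Your overall strategy matches the paper's: diagonalise both bilinear forms and evaluate the diagonal sums by Lemma~\ref{lemma:Summation}. The first and fourth steps are fine, and you correctly anticipate that the numerator requires a second set of variables $y_r^*$ adapted to the form $\prod_{p\mid de}\frac{\nu(p)-1}{p-1}$.

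The gap is in your description of how $y_r^*$ relates to $y_r$. You write that ``$z_r$ is $y_r$ shifted by a simple multiplicative factor''; this is wrong, and if you proceeded on that basis you would lose exactly the $(\log X)^{1/2}$ you are trying to produce. The correct relation, obtained by substituting the defining expression for $\lambda_d$ into the definition of $y_r^*$, is a \emph{sum}: $y_r^* = \mu(r)^2\, r\sum_{s:\,r\mid s} y_s/\phi(s)$. This sum must itself be evaluated by Lemma~\ref{lemma:Summation} with $\kappa=1/2$, yielding $y_r^*\asymp \frac{\phi(W)}{W}\bigl(\log(X^{1/10}/r)\bigr)^{1/2}$ times an absolute constant. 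Squaring introduces a weight $\log(X^{1/10}/r)$ into the numerator sum, so when you apply Lemma~\ref{lemma:Summation} there (with $\kappa=(k-1)/2$ and $f(t)=1-t$) you obtain $(\log X^{1/10})^{(k+1)/2}/\Gamma((k+3)/2)$, against $(\log X^{1/10})^{k/2}/\Gamma(1+k/2)$ for the denominator. The ratio of the log-powers gives $(\log X)^{1/2}$, and the ratio $\Gamma(1+k/2)/\Gamma((k+3)/2)\asymp k^{-1/2}$ by Stirling gives the $k^{-1/2}$ --- not a local product $\prod_p(1-1/\nu(p))$ as you suggest. So there are \emph{three} invocations of Lemma~\ref{lemma:Summation} in total (with $\kappa=1/2$, $k/2$, $(k-1)/2$), and the first of these, computing $y_r^*$ explicitly, is where the gain actually lives.
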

\begin{proof}
We first note that the $y_r$ variables diagonalize the second quadratic form. Substituting the expression for $\lambda_d$ in terms of $y_r$ given by then Lemma, we obtain
\begin{align}
\sum_{d,e}&\lambda_d\lambda_e\prod_{p|de}\frac{\nu(p)}{p}=\sum_{r,s}y_ry_s\Bigl(\prod_{p|r}\frac{\nu(p)}{p-\nu(p)}\Bigr)\Bigl(\prod_{p|s}\frac{\nu(p)}{p-\nu(p)}\Bigr)\nonumber\\
&\qquad\qquad\times\sum_{d,e:d|r,e|s}\mu(d)\mu(e)\Bigl(\prod_{p|d}\frac{p}{\nu(p)}\Bigr)\Bigl(\prod_{p|e}\frac{p}{\nu(p)}\Bigr)\Bigl(\prod_{p|de}\frac{\nu(p)}{p}\Bigr).
\end{align}
The sum over $d,e$ vanishes unless $r=s$, and so we find that
\begin{align}
\sum_{d,e}\lambda_d\lambda_e\prod_{p|de}\frac{\nu(p)}{p}=\sum_{r}y_r^2\prod_{p|r}\frac{\nu(p)}{p-\nu(p)}.\label{eq:YQuad}
\end{align}
We now introduce new variables $y_r^*$ to perform an analogous diagonalization for the first quadratic form. Let $y_r^*$ be zero unless $r\in\langle P_3\rangle$, $(r,Wp_0)=1$, $r<X^{1/10}$ and $\nu(p)>1$ for all $p|r$. In this case, let $y_r^*$  be given by
\begin{equation}
y_r^*=\mu(r)\Bigl(\prod_{p|r}\frac{p-\nu(p)}{\nu(p)-1}\Bigr)\sum_{d:r|d}\lambda_d\Bigl(\prod_{p|d}\frac{\nu(p)-1}{p-1}\Bigr).\label{eq:YsDef}
\end{equation}
We see from this that for any $d$ satisfying the same conditions, we have
\begin{align}
\mu(d)\lambda_d&=\Bigl(\prod_{p|d}\frac{p-1}{\nu(p)-1}\Bigr)\sum_{e:d|e}\lambda_e\Bigl(\prod_{p|e}\frac{\nu(p)-1}{p-1}\Bigr)\sum_{r:d|r,\,r|e}\mu(r)\nonumber\\
&=\Bigl(\prod_{p|d}\frac{p-1}{\nu(p)-1}\Bigr)\sum_{r:d|r}y_r^*\Bigl(\prod_{p|r}\frac{\nu(p)-1}{p-\nu(p)}\Bigr).
\end{align}
This gives $\lambda_d$ in terms of $y_r^*$. Performing the analogous computation to \eqref{eq:YQuad} with $y_r^*$ in place of $y_r$, we obtain
\begin{equation}
\sum_{d,e}\lambda_d\lambda_e\prod_{p|de}\frac{\nu(p)-1}{p-1}=\sum_{r}(y^*_r)^2\prod_{p|r}\frac{\nu(p)-1}{p-\nu(p)}.\label{eq:YsQuad}
\end{equation}

Substituting the expression for $\lambda_d$ in terms of $y_r$ from the statement of the lemma in \eqref{eq:YsDef} gives (for $r$ such that $y_r^*\ne 0$)
\begin{align}
y_r^*&=\mu(r)\Bigl(\prod_{p|r}\frac{p-\nu(p)}{\nu(p)-1}\Bigr)\sum_{s:r|s}y_{s}\Bigl(\prod_{p|s}\frac{\nu(p)}{p-\nu(p)}\Bigr)\sum_{d:r|d,\,d|s}\mu(d)\Bigl(\prod_{p|d}\frac{p(\nu(p)-1)}{\nu(p)(p-1)}\Bigr)\nonumber\\
&=\mu(r)^2 r\sum_{s:r|s}\frac{y_s}{\phi(s)}.\label{eq:YsExpression}
\end{align}
We can now use Lemma \ref{lemma:Summation} and definition of $y_r$ from the statement of the lemma to evaluate this sum. We take $\gamma_1(p)=1$ if $p\equiv 3\pmod{4}$ and  $(p,rp_0W)=1$, and take $\gamma_1(p)=0$ otherwise. We see that for any $2\le w\le z\le X^{1/10}$ we have
\begin{equation}-\frac{1}{2}\sum_{p|rp_0W}\frac{\log{p}}{p}+O(1)\le \sum_{w<z<X^{1/10}}\frac{\gamma_1(p)\log{p}}{p}-\frac{1}{2}\log{z/w}\ll 1.\end{equation}
The sum on the left hand side is $O(\log\log{X})$. Thus, by applying Lemma \ref{lemma:Summation} to the sum \eqref{eq:YsExpression}, we have for squarefree $r<X^{1/10}$ with $r\in\langle P_3\rangle$ and $(r,Wp_0)=1$ and $\prod_{p|r}(\nu(p)-1)>0$ that
\begin{align}
&y_r^*=\frac{r}{\phi(r)}\sum_{t<X^{1/10}/r}\mu^2(t)\prod_{p|t}\frac{\gamma_1(p)}{p-\gamma_1(p)}\nonumber\\
&=\frac{r}{\phi(r)\Gamma(3/2)}\Biggl(\Bigl(\log{\frac{X^{\frac{1}{10}}}{r}}\Bigr)^{\frac{1}{2}}+O\Bigl(\frac{\log\log{X}}{(\log{X})^{\frac{1}{2}}}\Bigr)\Biggr)\prod_{p<X}\Bigl(1-\frac{\gamma_1(p)}{p}\Bigr)^{-1}\Bigl(1-\frac{1}{p}\Bigr)^{\frac{1}{2}}.\label{eq:YsVal}
\end{align}
Since $r,W\in\langle P_3\rangle$ and $\phi(p_0)/p_0=1+o(1)$, the product simplifies to give
\begin{equation}
\frac{r}{\phi(r)}\prod_{p<X}\Bigl(1-\frac{\gamma_1(p)}{p}\Bigr)^{-1}\Bigl(1-\frac{1}{p}\Bigr)^{1/2}=(1+o(1))\frac{\phi(W)}{W}\prod_{p<X}\Bigl(1-\frac{1}{p}\Bigr)^{\chi(p)/2},\label{eq:YsProd}\end{equation}
where $\chi$ is the non-trivial character modulo 4.

We can now evaluate both of the expressions \eqref{eq:YQuad} and \eqref{eq:YsQuad} using Lemma \ref{lemma:Summation}. For \eqref{eq:YQuad}, we take $\gamma_2(p)=\nu(p)$ if $p\equiv 3\pmod{4}$ and $(p,Wp_0)=1$, and $\gamma_2(p)=0$ otherwise. We see that
\begin{equation}
-\frac{k}{2}\sum_{p|p_0 W\prod_{i\ne j} (a_ib_j-b_ja_i)}\frac{\log{p}}{p}+O(1)\ge \sum_{w<p<z}\frac{\gamma_2(p)\log{p}}{p}-\frac{k}{2}\log\frac{z}{w}\ll 1.
\end{equation}
Thus, we obtain (with $\kappa=k/2$ and $L\ll k\log\log{X}$)
\begin{align}
&\sum_{r<X^{1/10}}y_r^2\prod_{p|r}\frac{\nu(p)}{p-\nu(p)}\nonumber\\
&=\frac{(\log{X^{1/10}})^{k/2}}{\Gamma(1+k/2)}\Bigl(1+O\Bigl(\frac{k^2\log\log{X}}{\log{X}}\Bigr)\Bigr)\prod_{p<X}\Bigl(1-\frac{\gamma_2(p)}{p}\Bigr)^{-1}\Bigl(1-\frac{1}{p}\Bigr)^{k/2}.\label{eq:YSum}
\end{align}
Similarly, for \eqref{eq:YsQuad} we take $\gamma_3(p)=(\nu(p)-1)p/(p-1)$ for $p\equiv 3\pmod{4}$ and $(p,Wp_0)=1$, and take $\gamma_3(p)=0$ otherwise. We obtain (with $\kappa=(k-1)/2$ and $L\ll k\log\log{X}$)
\begin{align}
&\sum_{r<X^{1/10}}\mu^2(r)\Bigl(\log{\frac{X^{1/10}}{r}}+O(\log\log{X})\Bigr)\prod_{p|r}\frac{\gamma_3(p)}{p-\gamma_3(p)}\nonumber\\
&=\frac{(\log{X^{1/10}})^{\frac{k+1}{2}}}{\Gamma((k+3)/2)}\Bigl(1+O\Bigl(\frac{k^2\log\log{X}}{\log{X}}\Bigr)\Bigr)\prod_{p<X}\Bigl(1-\frac{\gamma_3(p)}{p}\Bigr)^{-1}\Bigl(1-\frac{1}{p}\Bigr)^{\frac{k-1}{2}}.\label{eq:YsSum}
\end{align}
We note that if $\gamma_3(p)\ne 0$, then $1-\gamma_3(p)/p=(1-\gamma_2(p)/p)(1-1/p)^{-1}$. Thus, since $W\in\langle P_3\rangle$ and $\phi(p_0)/p_0=1+o(1)$, we have
\begin{align}
\prod_{p<X}\Bigl(1-\frac{\gamma_3(p)}{p}\Bigr)^{-1}\Bigl(1-\frac{1}{p}\Bigr)^{\frac{k-1}{2}}\sim\frac{W}{\phi(W)}\prod_{p<X}\Bigl(1-\frac{\gamma_2(p)}{p}\Bigr)^{-1}\Bigl(1-\frac{1}{p}\Bigr)^{\frac{k-\chi(p)}{2}}.\label{eq:YProd}
\end{align}
Putting the estimates \eqref{eq:YsVal}, \eqref{eq:YsProd}, \eqref{eq:YSum}, \eqref{eq:YsSum}, \eqref{eq:YProd} together, we obtain
\begin{align}
\sum_{r}(y^*_r)^2\prod_{p|r}\frac{\nu(p)-1}{p-\nu(p)}&\sim\frac{\phi(W)\Gamma(1+k/2)(\log{X^{1/10}})^{1/2}}{W\Gamma(3/2)^2\Gamma((k+3)/2)}\prod_{p<X}\Bigl(1-\frac{1}{p}\Bigr)^{\chi(p)/2}\nonumber\\
&\qquad\times\sum_{r}y_r^2\prod_{p|r}\frac{\nu(p)}{p-\nu(p)}.
\end{align}
Stirling's formula shows that $\Gamma(x)\sim \sqrt{2\pi x}x^xe^{-x}$, and so for $k$ sufficiently large $\Gamma(1+k/2)/\Gamma((k+3)/2)\gg k^{-1/2}$. Since the product over primes is convergent as $X\rightarrow\infty$, this gives
\begin{align}
\sum_{r}(y^*_r)^2\prod_{p|r}\frac{\nu(p)-1}{p-\nu(p)}
&\gg \frac{\phi(W)}{W}k^{-1/2}(\log{X})^{1/2}\sum_{r}y_r^2\prod_{p|r}\frac{\nu(p)}{p-\nu(p)}.
\end{align}
Finally, from \eqref{eq:YSum} and $k\le (\log{X})^{1/5}$, it follows that we have the crude bound that this is $\gg 1$.
\end{proof}
\begin{lemma}\label{lemma:LambdaSize}
Let $k\le (\log{X})^{1/5}$ and $\lambda_d$ be as given by Lemma \ref{lemma:Ratio}. Then
\[|\lambda_d|\ll (\log{X})^{k/2}.\]
\end{lemma}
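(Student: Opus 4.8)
The plan is to bound $|\lambda_d|$ by using the formula
\[\lambda_d=\mu(d)\Bigl(\prod_{p|d}\frac{p}{\nu(p)}\Bigr)\sum_{d|r}y_r\Bigl(\prod_{p|r}\frac{\nu(p)}{p-\nu(p)}\Bigr)\]
from Lemma \ref{lemma:Ratio}, writing $r=ds$ with $(s,d)=1$, $s\in\langle P_3\rangle$, $s<X^{1/10}/d$. First I would note that $\prod_{p|d}(p/\nu(p))\le \prod_{p|d}p = d\le X^{1/10}$ trivially (using $\nu(p)\ge 1$ for the primes dividing $d$, which holds since $y_r$ forces $\nu(p)>1$ on such $r\supseteq d$), and also $\prod_{p|d}(\nu(p)/(p-\nu(p)))\le\prod_{p|d} k/(p-k)$, which is small; so the bulk of the size comes from the inner sum over $s$. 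After factoring out the part supported on $d$, the sum over $s$ is at most
\[\sum_{\substack{s<X^{1/10}\\ s\in\langle P_3\rangle,\,\mu^2(s)=1}}\prod_{p|s}\frac{\nu(p)}{p-\nu(p)}\le \sum_{\substack{s<X^{1/10}}}\mu^2(s)\prod_{p|s}\frac{k}{p-k},\]
using $\nu(p)\le k$.

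Next I would estimate this last sum by Rankin's trick or directly by Lemma \ref{lemma:Summation} with $\gamma(p)$ essentially $k$ (or $\nu(p)$), $\kappa$ of size $k/2$ and $f=1$; this gives a bound of order $(\log X)^{k/2}$ up to a convergent product, which is exactly the target. Alternatively, and more simply, one can bound $\sum_{s<X^{1/10}}\mu^2(s)\prod_{p|s}\frac{k}{p-k}\le \prod_{p<X^{1/10}}\bigl(1+\frac{k}{p-k}\bigr)$, and then take logarithms: $\sum_{p<X^{1/10}}\log(1+\frac{k}{p-k})\le k\sum_{p<X^{1/10}}\frac{1}{p-k}+O(k)= \frac{k}{2}\log\log X + O(k\log\log\log X)$ once one restricts to $p\equiv 3\pmod 4$ (only such primes can divide $s\in\langle P_3\rangle$), using Mertens' theorem for primes in the residue class $3\pmod 4$, which has density $1/2$. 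Exponentiating gives $\ll (\log X)^{k/2}(\log\log X)^{O(k)}$, and since $k\le(\log X)^{1/5}$ the factor $(\log\log X)^{O(k)}$ is absorbed into $(\log X)^{\epsilon'}$ for the purposes of the weaker-looking bound $(\log X)^{k/2}$ — but to match the clean statement exactly one should keep track of constants and verify that the implied product over $p\equiv 3\pmod 4$ of $(1-1/p)^{-1/2}$-type factors converges, so that the exponent of $\log X$ is exactly $k/2$ and the rest is a bounded constant times whatever subpolynomial error.

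The only mild subtlety — and the step I would be most careful about — is making sure the $d$-dependent prefactors really are harmless, i.e. that $\prod_{p|d}(p/\nu(p))\cdot\prod_{p|d}(\nu(p)/(p-\nu(p)))=\prod_{p|d}(p/(p-\nu(p)))$ is at most $(\log X)^{o(k)}$ (it is: it is $\le\prod_{p|d}(1+k/(p-k))$, bounded just as above since $d<X^{1/10}$ has $\ll \log X/\log\log X$ prime factors each $\ge$ some bound, but in fact the primes dividing $d$ are $\equiv 3\pmod 4$ and the same Mertens estimate applies), and crucially that combining the $d$-part and the $s$-part does not double-count the contribution of the primes $p\mid d$ in a way that pushes the exponent above $k/2$. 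Since $r=ds$ ranges over squarefree integers in $\langle P_3\rangle$ below $X^{1/10}$, the combined sum $\sum_{d|r}(\cdots)$ telescopes into a single Euler product $\prod_{p<X^{1/10},\,p\equiv 3(4)}(1+\frac{\nu(p)}{p-\nu(p)})\le\prod(1+\frac{k}{p-k})$, giving the bound $(\log X)^{k/2}$ directly with the correct exponent. Putting the pieces together yields $|\lambda_d|\ll(\log X)^{k/2}$, as claimed; no genuine obstacle arises, only bookkeeping of Mertens-type constants.
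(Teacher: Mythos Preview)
Your Euler-product approach is more elementary than what the paper does, and it can be made to work, but as written there is a real gap --- precisely at the point you flag as a ``mild subtlety''.

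The problem is the factor you call $(\log\log X)^{O(k)}$. You cannot simply say it is ``absorbed into $(\log X)^{\epsilon'}$'': for $k$ as large as $(\log X)^{1/5}$, a factor $(\log\log X)^{Ck}=\exp\bigl(Ck\log\log\log X\bigr)$ would dwarf any fixed power of $\log X$ and ruin the bound. You must actually show this factor is $O(1)$, and your sketch does not. What you are missing is the condition $(r,Wp_0)=1$ built into the support of $y_r$: this forces every prime $p\mid r$ (hence every prime appearing in your Euler product) to satisfy $p>2(\log X)^{1/3}$. Without this cutoff your product $\prod_{p\equiv 3\,(4)}\bigl(1+\tfrac{k}{p-k}\bigr)$ is not even well-defined for $p\le k$. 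With the cutoff, Mertens in the progression $3\pmod 4$ gives
\[
\sum_{\substack{2(\log X)^{1/3}<p<X^{1/10}\\ p\equiv 3\ (4)}}\log\Bigl(1+\frac{k}{p-k}\Bigr)
\le\frac{k}{2}\log\log X-\frac{k}{2}\log\log\log X+O(k),
\]
so that the product (including the $d$-prefactor, which you correctly fold in) is $\ll(\log X)^{k/2}(\log\log X)^{-k/2}e^{O(k)}\ll(\log X)^{k/2}$ once $\log\log\log X$ exceeds twice the implied constant.

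For comparison, the paper does not bound by the full Euler product but applies Lemma~\ref{lemma:Summation} with $\kappa=k/2$ to the truncated sum, obtaining $\mathfrak S_\gamma(\log X)^{k/2}/\Gamma(1+k/2)$. It then checks that $\mathfrak S_\gamma$ together with the $d$-prefactor is only $e^{O(k)}$ (the $(1-1/p)^{k/2}$ factors over both residue classes $\bmod\ 4$ cancel the $(1-k/p)^{-1}$ factors up to this), and it is the Gamma factor $\Gamma(1+k/2)^{-1}\asymp (e/k)^{k/2}$ from Stirling that absorbs the $e^{O(k)}$. Your first option --- invoking Lemma~\ref{lemma:Summation} --- is exactly this route, but your claim that what remains is merely ``a convergent product'' hides the same issue: the singular series is of size $e^{O(k)}$, not $O(1)$, and the $\Gamma$ factor is what saves you. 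Either route works, but in each case one must actually identify the saving rather than hope the extra factor is harmless.
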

\begin{proof}
This is an immediate application of Lemma \ref{lemma:Summation}. We take $\gamma_4(p)=\nu(p)$ if $p\equiv3\pmod{4}$ and $(p,dWp_0)=1$, and $\gamma_4(p)=0$ otherwise. By Lemma \ref{lemma:Summation} (taking $\kappa=k/2$ and $L\ll k\log\log{X}$) we have
\begin{align}
|\lambda_d|&=\Bigl(\prod_{p|d}\frac{p}{p-\nu(p)}\Bigr)\sum_{\substack{r<X^{1/10}/d\\ (r,dWp_0)=1\\ r\in\langle P_3\rangle}}\mu^2(r)\prod_{p|r}\frac{\nu(p)}{p-\nu(p)}\nonumber\\
&\ll \frac{(\log{X})^{k/2}}{\Gamma(1+k/2)}
\prod_{p|d}\Bigl(1-\frac{\nu(p)}{p}\Bigr)^{-1}\prod_{p}\Bigl(1-\frac{\gamma_4(p)}{p}\Bigr)^{-1}\Bigl(1-\frac{1}{p}\Bigr)^{k/2}\nonumber\\
&\ll \frac{(\log{X})^{k/2}}{\Gamma(1+k/2)}\prod_{\substack{(\log{X})^{1/3}<p<X\\ p\equiv 3\pmod{4}}}\Bigl(1-\frac{k}{p}\Bigr)^{-1}\Bigl(1-\frac{1}{p}\Bigr)^{k/2}
\prod_{\substack{(\log{X})^{1/3}<p<X\\ p\equiv 1\pmod{4}}}\Bigl(1-\frac{1}{p}\Bigr)^{k/2}.
\end{align}
Recalling that $k\le (\log{X})^{1/5}$, we see that this is
\begin{align}
&\ll \frac{(\log{X})^{k/2}}{\Gamma(1+k/2)}\exp\Bigl(\frac{k}{2}\sum_{\substack{(\log{X})^{1/3}<p<X\\ p\equiv 3\pmod{4}}}\frac{1+O(p^{-1})}{p}-\frac{k}{2}\sum_{\substack{(\log{X})^{1/3}<p<X\\ p\equiv 1\pmod{4}}}\frac{1+O(p^{-1})}{p}\Bigr)\nonumber\\
&\ll \exp(O(k))k^{-k/2}(\log{X})^{k/2}\ll (\log{X})^{k/2}.\end{align}
\end{proof}
\begin{proof}[Proof of Theorem \ref{theorem:Admiss}]
By Lemmas \ref{lemma:Ratio} and \ref{lemma:LambdaSize}, there is a choice of coefficients $\lambda_d$ such that $|\lambda_d|\ll (\log{X})^{k/2}$ and 
\[\sum_{d,e}\lambda_d\lambda_e\prod_{p|de}\frac{\nu(p)-1}{p-1}\gg k^{-1/2}(\log{X})^{1/2}\frac{\phi(W)}{W}\sum_{d,e}\lambda_d\lambda_e\prod_{p|de}\frac{\nu(p)}{p}\gg 1.\]
Thus, by Lemmas \ref{lemma:Const} and \ref{lemma:Squares}, this choice of $\lambda_d$ corresponds to a choice of $w_n\ge 0$ such that
\begin{equation}
\sum_{\substack{X< n\le 2X\\n\equiv v_0\pmod{W}}}\#\Bigl\{1\le i\le k:L_i(n)\in\mathcal{S}\Bigr\}w_n\gg k^{1/2}\sum_{\substack{X< n\le 2X\\n\equiv v_0\pmod{W}}}w_n\gg \frac{X}{W\log{X}}.\label{eq:KeySum}
\end{equation}
Lemma \ref{lemma:LambdaSize} shows $|\lambda_d|\ll (\log{X})^{k/2}$, so we have
\begin{equation}
w_n\ll (\log{X})^k\Bigl(\sum_{\substack{d|\prod L_i(n)\\ \lambda_d\ne 0}}1\Bigr)^2.
\end{equation}
Thus
\begin{align}
\sum_{X\le n\le 2X}w_n^2&\ll (\log{X})^{2k}\sum_{d_1,d_2,d_3,d_4<X^{1/10}}\Bigl(\prod_{i=1}^4\mu^2(d_i)\Bigr) \sum_{\substack{X\le n\le 2X\\ d_1,d_2,d_3,d_4|\prod_{i=1}^kL_i(n)}}1\nonumber\\
& \ll X (\log{X})^{2k}\sum_{d_1,d_2,d_3,d_4<X^{1/10}}\Bigl(\prod_{i=1}^4\mu^2(d_i)\Bigr)\prod_{p|d_1d_2d_3d_4}\frac{k}{p} \nonumber\\
&\ll X (\log{X})^{2k}\sum_{r<X^{2/5}} \frac{(15 k^2)^{\nu(r)}}{r}
\ll X(\log{x})^{16 k^2}.\label{eq:NormalisedSum}
\end{align}
By Cauchy-Schwarz, we have that
\begin{align}
&\#\{n\in[X,2X]:\#\{L_1(n),\dots,L_k(n)\}\cap\mathcal{S}\ge ck^{1/2}\}\nonumber\\
&\quad\ge \Bigl(k^{-1}\sum_{n\in[X,2X]}\Bigl(\#\{i:L_i(n)\in\mathcal{S}\}-ck^{1/2}\Bigr)w_n\Bigr)^2\Bigl(\sum_{n\in[X,2X]}w_n^2\Bigr)^{-1},
\end{align}
provided the sum being squared is positive. Therefore, by \eqref{eq:KeySum} and \eqref{eq:NormalisedSum}, we see that for $c$ sufficiently small we have
\begin{align}
&\#\{n\in[X,2X]:\#\{L_1(n),\dots,L_k(n)\}\cap\mathcal{S}\ge ck^{1/2}\}\nonumber\\
&\quad\ge \frac{1}{X(\log{X})^{16k^2}}\Biggl(\sum_{\substack{n\in[X,2X]\\ n\equiv v_0\pmod{W}}}\Bigl(\#\Bigl\{i:L_i(n)\in\mathcal{S}\Bigr\}w_n-ck^{1/2}\Bigr)w_n\Biggr)^2\nonumber\\
&\quad\gg \frac{X}{W^2(\log{X})^{16k^2+2}}\gg X\exp(-(\log{X})^{1/2}).
\end{align}
Here we have used the fact that $k\le (\log{X})^{1/5}$ and $W\ll \exp((\log{X})^{2/5})$ in the last line. This completes the proof of Theorem \ref{theorem:Admiss}.
\end{proof}
%
%
%
%
%
\section{Acknowledgement}
The work in this paper was started whilst the author was a CRM-ISM Postdoctoral Fellow and the Universit\'e de Montr\'eal, and was completed whilst he was a Fellow by Examination at Magdalen College, Oxford.


\begin{thebibliography}{10}

\bibitem{BalogWooley}
Antal Balog and Trevor~D. Wooley.
\newblock Sums of two squares in short intervals.
\newblock {\em Canad. J. Math.}, 52(4):673--694, 2000.

\bibitem{Davenport}
H.~Davenport.
\newblock {\em Multiplicative Number Theory}.
\newblock Graduate Texts in Mathematics. Springer New York, 2000.

\bibitem{FGKT}
K.~Ford, B.~Green, S.~Konyagin, and T.~Tao.
\newblock Large gaps between consecutive primes.
\newblock preprint, \url{http://arxiv.org/abs/1408.4505}.

\bibitem{FordPratt}
Kevin Ford, Sergei~V. Konyagin, and Florian Luca.
\newblock Prime chains and {P}ratt trees.
\newblock {\em Geom. Funct. Anal.}, 20(5):1231--1258, 2010.

\bibitem{FriedlanderGranvilleIII}
John Friedlander and Andrew Granville.
\newblock Limitations to the equi-distribution of primes. {III}.
\newblock {\em Compositio Math.}, 81(1):19--32, 1992.

\bibitem{GGPY}
D.~A. Goldston, S.~W. Graham, J.~Pintz, and C.~Y. Y{\i}ld{\i}r{\i}m.
\newblock Small gaps between products of two primes.
\newblock {\em Proc. Lond. Math. Soc. (3)}, 98(3):741--774, 2009.

\bibitem{GPYII}
Daniel~A. Goldston, J{\'a}nos Pintz, and Cem~Yal{\c{c}}in Y{\i}ld{\i}r{\i}m.
\newblock Primes in tuples. {II}.
\newblock {\em Acta Math.}, 204(1):1--47, 2010.

\bibitem{GranvilleSound}
Andrew Granville and K.~Soundararajan.
\newblock An uncertainty principle for arithmetic sequences.
\newblock {\em Ann. of Math. (2)}, 165(2):593--635, 2007.

\bibitem{HalberstamRichert}
H.~Halberstam and H.E. Richert.
\newblock {\em Sieve methods}.
\newblock L.M.S. monographs. Academic Press, 1974.

\bibitem{HeathBrownIntervals}
D.~R. Heath-Brown.
\newblock The number of primes in a short interval.
\newblock {\em J. Reine Angew. Math.}, 389:22--63, 1988.

\bibitem{HooleyIV}
Christopher Hooley.
\newblock On the intervals between numbers that are sums of two squares. {IV}.
\newblock {\em J. Reine Angew. Math.}, 452:79--109, 1994.

\bibitem{Huxley}
M.~N. Huxley.
\newblock On the difference between consecutive primes.
\newblock {\em Invent. Math.}, 15:164--170, 1972.

\bibitem{MaierPomerance}
H.~Maier and C.~Pomerance.
\newblock Unusually large gaps between consecutive primes.
\newblock {\em Trans. Amer. Math. Soc.}, 322(1):201--237, 1990.

\bibitem{MaierStewart}
H.~Maier and C.~L. Stewart.
\newblock On intervals with few prime numbers.
\newblock {\em J. Reine Angew. Math.}, 608:183--199, 2007.

\bibitem{Maier}
Helmut Maier.
\newblock Primes in short intervals.
\newblock {\em Michigan Math. J.}, 32(2):221--225, 1985.

\bibitem{MaynardII}
J.~Maynard.
\newblock Dense clusters of primes in subsets.
\newblock preprint, \url{http://arxiv.org/abs/1405.2593}.

\bibitem{MaynardLargeGaps}
J.~Maynard.
\newblock Large gaps between primes.
\newblock preprint, \url{http://arxiv.org/abs/1408.5110}.

\bibitem{Maynard}
J.~Maynard.
\newblock Small gaps between primes.
\newblock {\em Ann. of Math.(2), to appear}.

\bibitem{Pintz}
J.~Pintz.
\newblock Very large gaps between consecutive primes.
\newblock {\em J. Number Theory}, 63(2):286--301, 1997.

\bibitem{Richards}
Ian Richards.
\newblock On the gaps between numbers which are sums of two squares.
\newblock {\em Adv. in Math.}, 46(1):1--2, 1982.

\bibitem{Zhang}
Y.~Zhang.
\newblock Bounded gaps between primes.
\newblock {\em Ann. of Math. (2)}, 179(3):1121--1174, 2014.

\end{thebibliography}
\end{document}